\newtheorem{theorem}{Theorem}[section]
\newtheorem{lemma}[theorem]{Lemma}
\newtheorem{proof of lemma}[theorem]{Proof of Lemma}
\newtheorem{proposition}[theorem]{Proposition}
\theoremstyle{definition}
\newtheorem{definition}[theorem]{Definition}
\newtheorem{remark}[theorem]{Remark}
\numberwithin{equation}{section}
\begin{document}
\title[Benedicks-Amrein-Berthier theorem]
{Benedicks-Amrein-Berthier theorem for the Heisenberg motion group and quaternion Heisenberg group}

\author{Somnath Ghosh and R.K. Srivastava}

\address{Department of Mathematics, Indian Institute of Technology, Guwahati, India 781039.}
\email{gsomnath@iitg.ac.in, rksri@iitg.ac.in}

\subjclass[2000]{Primary 42A38; Secondary 44A35}

\date{\today}

\keywords{Fourier transform, Heisenberg motion group, Quaternion Heisenberg group, Uncertainty principle.}

\begin{abstract}
Since $(\mathbb{H}^n\rtimes U(n),U(n))$ is a Gelfand pair, an exact analogue of the
Heisenberg group result due to Narayanan and Ratnakumar is not possible for the
Heisenberg motion group. In this article, we prove that if the Weyl transform of a
finitely supported integrable function on the Heisenberg motion group is non-zero
only for finitely many Fourier-Wigner pieces and have finite rank, then the function
must be zero. We also prove an analogue of the Heisenberg group result on the quaternion
Heisenberg group. In the end, a quantitative interpretation of these results is described
through strong annihilating pair for the Weyl transform.
\end{abstract}

\maketitle

\section{Introduction}\label{section1}
In an interesting article \cite{B}, Benedicks proved that if $f\in L^1(\mathbb{R}^n),$
then both the sets $\{x\in \mathbb{R}^n:f(x)\neq0\}$ and $\{\xi\in \mathbb{R}^n:\hat{f}(\xi)\neq0\}$
cannot have finite Lebesgue measure, unless $f=0.$ Concurrently, in the article \cite{AB}, Amrein-Berthier
reached the same conclusion via the Hilbert space theory. The aforesaid fundamental result got further
attention in general Lie groups.

\smallskip

Let $G$ be a locally compact group and $\hat{m}$ denotes the Plancherel measure on the unitary
dual group $\hat{G}.$ Then $G$ is said to satisfy qualitative uncertainty principle (QUP) if
for each $f \in L^2(G)$ with $m\{x\in G: f(x)\neq 0\}<m(G)$ and
\begin{equation}\label{exp79}
\int_{\hat{G}}\text{rank}\hat f(\lambda)\,d\hat{m}(\lambda)<\infty
\end{equation}
implies $f=0.$ In \cite{AL}, QUP was proved for certain unimodular groups of type I. A brief survey
of QUP is presented in \cite{FS}. In the case of the Heisenberg group $\mathbb H^n,$ the condition
(\ref{exp79}) of QUP implies $\hat f$ should be supported on a set of finite Plancherel measure together
with $\text{rank}\hat f(\lambda)$ is finite for almost all $\lambda.$

\smallskip

In \cite{NR}, Narayanan and Ratnakumar proved that if $f\in L^1(\mathbb{H}^n)$ is
supported on $B\times \mathbb{R},$ where $B$ is a compact subset of $\mathbb{C}^n$,
and $\hat{f}(\lambda)$ has finite rank for each $\lambda,$ then $f=0.$ Thereafter,
Vemuri \cite{V} replaced the compactness condition on $B$ by finite measure.
In \cite{CGS}, authors consider $B$ as a rectangle in $\mathbb R^{2n}$ while proving
an analogous result on step two nilpotent Lie groups and a version of this result, with
a strong assumption on rank, derived therein for the Heisenberg motion group. Later in
the article \cite{GS}, this result is extended to arbitrary set $B$ of finite measure
for general step two nilpotent Lie groups. In this article, we prove the result on the
quaternion Heisenberg group when $B$ is an arbitrary set of finite measure and the
following analogue result on the Heisenberg motion group.

\smallskip

First, consider the Heisenberg motion group $G,$ which is the semidirect product of
$\mathbb H^n$ and $U(n),$ the unitary group on $\mathbb C^n.$ Denote $K=U(n).$ Then,
due to the fact that $(G, K)$ is a Gelfand pair \cite{BJR}, the Fourier transform of
a $K$-bi invariant integrable function has rank one, irrespective of support of the
function. Thus, an exact analogue of the Heisenberg group result due to Narayanan and
Ratnakumar \cite{NR} is not inevitable for the Heisenberg motion group.

\smallskip

However, we prove that if the Weyl transform of a finitely supported integrable function
is non-zero only for finitely many Fourier-Wigner pieces and have finite rank, then
the function is zero. Consequently, we obtain that if each Fourier-Wigner piece of a
non-trivial function has finite support, then all of its Fourier transform can not have
finite rank. A quantitative interpretation of this result is described through strong
annihilating pair for the Weyl transform.

\smallskip

The proof of our results proceeds through the Hilbert space theory. However, specifying
the appropriate set of projections in the setups of the Heisenberg motion group and
quaternion Heisenberg group was a major bottleneck. This result, as of now, is the most
general analogue of Benedicks-Amrein-Berthier theorem in these setups.

\section{Heisenberg motion group}
The Heisenberg group $\mathbb H^n=\mathbb C^n\times\mathbb R$ is a step two
nilpotent Lie group having center $\mathbb R$ that equipped with the group law
\[(z,t)\cdot(w,s)=\left(z+w,t+s+\frac{1}{2}\text{Im}(z\cdot\bar w)\right).\]

\smallskip

By Stone-von Neumann theorem, the infinite dimensional irreducible unitary
representations of $\mathbb H^n$ can be parameterized by $\mathbb R^\ast=\mathbb
R\smallsetminus\{0\}.$ That is, each $\lambda\in\mathbb R^\ast$ defines a
Schr\"{o}dinger representation $\pi_\lambda$ of $\mathbb H^n$ via
\[\pi_\lambda(z,t)\varphi(\xi)=e^{i\lambda t}e^{i\lambda(x\cdot\xi+\frac{1}{2}x\cdot y)}\varphi(\xi+y),\]
where $z=x+iy$ and $\varphi\in L^2(\mathbb{R}^n).$

\smallskip

Having chosen sublaplacian $\mathcal L$ of the Heisenberg group $\mathbb H^n$ and its
geometry, there is a larger group of isometries that commute with $\mathcal L$, known as
Heisenberg motion group. The Heisenberg motion group $G$ is the semidirect product of
$\mathbb H^n$ with the unitary group $K=U(n).$ Since $K$ defines a group
of automorphisms on $\mathbb H^n,$ via $k\cdot(z,t)=(kz,t),$ the group law on
$G$ can be expressed as
\[(z,t,k_1)\cdot(w,s,k_2)=\left(z+k_1w, t+s-\frac{1}{2}\text{Im} (k_1w\cdot\bar z),k_1k_2\right).\]
Since a right $K$-invariant function on $G$ can be thought of as a function on $\mathbb H^n,$
the Haar measure on $G$ is given by $dg=dzdtdk,$ where $dzdt$ and $dk$ are the normalized
Haar measure on $\mathbb H^n$ and $K,$ respectively.

\smallskip

For $k\in K$ define another set of representations of the Heisenberg group
$\mathbb H^n$ by $\pi_{\lambda,k}(z,t)=\pi_\lambda(kz,t).$ Since $\pi_{\lambda,k}$
agrees with $\pi_\lambda$ on the center of $\mathbb H^n,$  it follows by
Stone-Von Neumann theorem for the Schr\"{o}dinger representation that
$\pi_{\lambda,k}$ is equivalent to $\pi_\lambda.$ Hence there exists an
intertwining operator $\mu_\lambda(k)$ satisfying
\[\pi_\lambda(kz,t)=\mu_\lambda(k)\pi_\lambda(z,t)\mu_\lambda(k)^\ast.\]
Then $\mu_\lambda$ can be thought of as a unitary
representation of $K$ on $L^2(\mathbb R^n),$ called
metaplectic representation. For details, we refer to \cite{BJR}.
Let $(\sigma,\mathcal H_\sigma)$ be an irreducible unitary representation of $K$
and $\mathcal H_\sigma=\text{span}\{e_j^\sigma:1\leq j\leq d_\sigma\}.$ For
$k\in K,$ the matrix coefficients of the representation $\sigma\in\hat K$ are
given by \[\varphi_{ij}^\sigma(k)=\langle\sigma(k)e_j^\sigma, e_i^\sigma\rangle,\]
where $i, j=1,\ldots,  d_\sigma.$ By Peter-Weyl theorem for compact groups \cite{Su},
it follows that the set  $\{\sqrt{d_\sigma}\varphi_{ij}^\sigma:1\leq i,j\leq d_\sigma,
\sigma\in\hat K\}$ is an orthonormal basis for $L^2(K).$

\smallskip

Let $\phi_\alpha^\lambda(x)=|\lambda|^{\frac{n}{4}}\phi_\alpha(\sqrt{|\lambda|}x);~\alpha\in\mathbb Z_+^n,$
where $\phi_\alpha$'s are the Hermite functions on $\mathbb R^n.$ Since for each $\lambda\in\mathbb R^\ast,$
the set $\{\phi_\alpha^\lambda : \alpha\in\mathbb Z_+^n \}$ forms an orthonormal basis for $L^2(\mathbb R^n),$
letting $P_m^\lambda=\text{span}\{\phi_\alpha^\lambda:~ |\alpha|=m\},$ $\mu_\lambda$ becomes an irreducible
unitary representation of $K$ on $P_m^\lambda.$ Hence, the action of $\mu_\lambda$ can be realized on
$P_m^\lambda$ by
\begin{equation}\label{exp50}
\mu_\lambda(k)\phi_\gamma^\lambda=\sum_{|\alpha|=|\gamma|}\eta_{\alpha\gamma}^\lambda(k)\phi_\alpha^\lambda,
\end{equation}
where $\eta_{\alpha\gamma}^\lambda$'s are the matrix coefficients of $\mu_\lambda(k).$
Define a bilinear form $\phi_{\alpha}^\lambda\otimes e_j^\sigma$ on
$L^2(\mathbb R^n)\times\mathcal H_\sigma$ by
$\phi_{\alpha}^\lambda\otimes e_j^\sigma=\phi_{\alpha}^\lambda e_j^\sigma.$ Then
$\{\phi_{\alpha}^\lambda\otimes e_j^\sigma: \alpha\in\mathbb Z_+^n,1\leq j\leq d_\sigma\}$
forms an orthonormal basis for $L^2(\mathbb R^n)\otimes\mathcal{H}_\sigma.$
Denote  $\mathcal H_\sigma^2=L^2(\mathbb R^n)\otimes\mathcal{H}_\sigma.$
Define a representation $\rho_\sigma^\lambda$
of $G$ on the space $\mathcal H_\sigma^2$ by
\[\rho_\sigma^\lambda(z,t,k)=\pi_\lambda(z,t)\mu_\lambda(k)\otimes\sigma(k).\]
Then $\rho_\sigma^\lambda$ are all possible irreducible unitary representations of $G$
those participate in the Plancherel formula \cite{S}. Thus, in view of the above discussion,
we shall denote the partial dual of the group $G$ by $ G'\cong\mathbb R^\ast\times\hat K.$
For $(\lambda,\sigma)\in G',$ the Fourier transform of $f\in L^1(G),$ defined by
\[\hat f(\lambda,\sigma) =\int_K\int_\mathbb{R}\int_{\mathbb{C}^n} f(z,t,k)\rho_\sigma^\lambda(z,t,k)dzdtdk,\]
is a bounded linear operator on $\mathcal H_\sigma^2.$ As the Plancherel formula \cite{S}
\[\int_K\int_{\mathbb{H}^n} |f(z,t,k)|^2 dzdtdk=(2\pi)^{-n}\sum_{\sigma\in\hat K}d_\sigma
\int_{\mathbb{R}\setminus\{0\}}\|\hat f(\lambda,\sigma)\|^2_{HS}|\lambda|^nd\lambda\]
holds for $f\in L^2(G),$ it follows that $\hat f(\lambda,\sigma)$ is a Hilbert-Schmidt
operator on $\mathcal H_\sigma^2.$

\smallskip

In order to prove our main result on the Heisenberg motion group $G,$ it is
enough to consider a similar proposition for the Weyl transform on
$G^\times=\mathbb C^n\times K.$ For that, we require to set some preliminaries
about the Weyl transform on $G^\times.$

\smallskip

Let $f^\lambda$ be the inverse Fourier transform of the function $f$
in the $t$ variable defined by
\begin{align}\label{exp57}
f^\lambda(z,k)=\int_{\mathbb R}f(z,t, k)e^{i\lambda t}dt.
\end{align}
Then
\[\hat f(\lambda,\sigma)=\int_K\int_{\mathbb{C}^n} f^\lambda(z,k)\rho_\sigma^\lambda(z,k)dzdk,\]
where $\rho_\sigma^\lambda(z,k)=\rho_\sigma^\lambda(z,0,k).$

\smallskip

For $(\lambda, \sigma)\in G',$ define the Weyl transform $W_\sigma^\lambda$ on $L^1(G^\times)$ by
\[W_\sigma^\lambda(g)=\int_{K}\int_{\mathbb{C}^n}g(z,k)\rho_\sigma^\lambda(z,k)dzdk.\]
Then $\hat{f}(\lambda,\sigma)=W_{\sigma}^\lambda (f^\lambda),$ and hence
$W_\sigma^\lambda(g)$ is a bounded operator if $g\in L^1(G^\times).$ On the other hand,
if $g\in L^2(G^\times)$ then $W_\sigma^\lambda(g)$ becomes a Hilbert-Schmidt operator
satisfying the Plancherel formula
\begin{align}\label{exp51}
\int_{K}\int_{\mathbb{C}^n}|g(z,k)|^2dzdk =(2\pi)^{-n}|\lambda|^n\sum_{\sigma\in\hat K}
d_\sigma\left|\left|W_\sigma^\lambda(g)\right|\right|_{HS}^2.
\end{align}

\smallskip

\noindent \textbf{Fourier-Wigner representation:}
Define the Fourier-Wigner transform $V_\zeta^\eta$ of the functions $\zeta,\eta\in \mathcal{H}_\sigma^2$ by
\[V_\zeta^\eta(z,k)=(2\pi)^{-\frac{n}{2}}|\lambda|^{\frac{n}{2}}\left\langle\rho_\sigma^\lambda(z,k)\zeta,\eta\right\rangle,\]
where $(z,k)\in G^\times.$ The following orthogonality relation is derived in \cite{CGS}.
A version of this result is also appeared in \cite{S}.
\begin{lemma}\label{lemma50}
For $\zeta_l, \eta_l\in \mathcal{H}_\sigma^2,~l=1,2,$ the corresponding Fourier-Wigner transforms
satisfy
\[\int_{K}\int_{\mathbb{C}^n}V_{\zeta_1}^{\eta_1}(z,k)\overline{V_{\zeta_2}^{\eta_2}(z,k)}dzdk
=\left\langle \zeta_1,\zeta_2 \right\rangle\overline{\left\langle \eta_1,\eta_2 \right\rangle}.\]
\end{lemma}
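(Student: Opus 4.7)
My plan is to reduce the identity to the case of simple tensors in $\mathcal H_\sigma^2 = L^2(\mathbb R^n) \otimes \mathcal H_\sigma$, then to separate the $z$- and $k$-integrations by Fubini, and finally to apply two standard orthogonality relations: the Fourier--Wigner orthogonality for the Schr\"odinger representation $\pi_1$ of $\mathbb H^n$ on $\mathbb C^n$, and Schur's orthogonality for the irreducible representation $\sigma$ of $K$.

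Both sides of the claimed identity are (anti-)linear in each of the four arguments $\zeta_1, \zeta_2, \eta_1, \eta_2$ and depend continuously on them, so by density it suffices to prove the identity when $\zeta_l = u_l \otimes v_l$ and $\eta_l = u_l' \otimes v_l'$ with $u_l, u_l' \in L^2(\mathbb R^n)$ and $v_l, v_l' \in \mathcal H_\sigma$. Using the factorization $\rho_\sigma(z,k) = \pi_1(z)\mu_1(k) \otimes \sigma(k)$, the Fourier--Wigner transform splits as
\[
V_{\zeta_l}^{\eta_l}(z,k) = \langle \pi_1(z)\mu_1(k)\, u_l,\, u_l'\rangle\,\langle \sigma(k)\, v_l,\, v_l'\rangle.
\]

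For each fixed $k$, I would integrate first over $\mathbb C^n$ using the Fourier--Wigner orthogonality on the Heisenberg group,
\[
\int_{\mathbb C^n} \langle \pi_1(z) a,\, b\rangle\, \overline{\langle \pi_1(z) c,\, d\rangle}\, dz = (2\pi)^n \langle a, c\rangle\, \overline{\langle b, d\rangle},
\]
applied with $a = \mu_1(k)u_1$, $c = \mu_1(k)u_2$, $b = u_1'$, $d = u_2'$. Since $\mu_1(k)$ is unitary, $\langle \mu_1(k)u_1, \mu_1(k)u_2\rangle = \langle u_1, u_2\rangle$, so the $z$-integral collapses to $(2\pi)^n \langle u_1, u_2\rangle\, \overline{\langle u_1', u_2'\rangle}$, a constant in $k$. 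The remaining $K$-integration
\[
\int_K \langle \sigma(k) v_1,\, v_1'\rangle\, \overline{\langle \sigma(k) v_2,\, v_2'\rangle}\, dk
\]
is then handled by Schur's orthogonality for $\sigma \in \hat K$, producing a scalar multiple of $\langle v_1, v_2\rangle\, \overline{\langle v_1', v_2'\rangle}$.

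Combining the two pieces, multiplicativity of the inner product on the tensor product, $\langle u_1, u_2\rangle \langle v_1, v_2\rangle = \langle \zeta_1, \zeta_2\rangle$ (and similarly for the $\eta_l$), reassembles the result into the claimed sesquilinear form. I expect the main obstacle to be purely technical bookkeeping: justifying the Fubini interchange (which follows from the Plancherel identity \eqref{exp51} applied to $V_{\zeta_l}^{\eta_l}$), and tracking the normalization constant from Schur's relation so that it combines with the $(2\pi)^n$ from the Heisenberg-group step to reproduce the stated constant.
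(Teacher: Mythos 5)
The paper does not actually prove Lemma \ref{lemma50}; it is quoted from \cite{CGS}. Your route --- reduce to simple tensors, factor $V_{\zeta}^{\eta}(z,k)=\langle\pi_1(z)\mu_1(k)u,u'\rangle\,\langle\sigma(k)v,v'\rangle$, integrate in $z$ using the Fourier--Wigner orthogonality for $\pi_1$ together with unitarity of $\mu_1(k)$, then integrate in $k$ using Schur --- is the natural proof and each step is sound; the density/limit argument at the start is also fine provided you first establish the identity for finite linear combinations of simple tensors (which gives the a priori $L^2$ control on $V_\zeta^\eta$) and only then pass to limits, rather than invoking (\ref{exp51}) up front.

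The one point you cannot defer as ``bookkeeping'' is the constant, because it does \emph{not} come out as stated. With the Haar measure on $K$ normalized (as the paper assumes), Schur's relation gives
\[
\int_K \langle \sigma(k)v_1,v_1'\rangle\,\overline{\langle\sigma(k)v_2,v_2'\rangle}\,dk
=\frac{1}{d_\sigma}\,\langle v_1,v_2\rangle\,\overline{\langle v_1',v_2'\rangle},
\]
so your computation yields $(2\pi)^n d_\sigma^{-1}\langle\zeta_1,\zeta_2\rangle\overline{\langle\eta_1,\eta_2\rangle}$, not $(2\pi)^n\langle\zeta_1,\zeta_2\rangle\overline{\langle\eta_1,\eta_2\rangle}$. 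This extra $d_\sigma^{-1}$ is genuinely there: it is exactly what is needed for consistency with the Plancherel formula (\ref{exp51}), whose right-hand side carries the weight $d_\sigma$ (apply (\ref{exp51}) to $g=\overline{V_{\zeta}^{\eta}}$, for which $W_{\sigma}(g)$ is rank one with $\|W_\sigma(g)\|_{HS}=c$ when $\|V_\zeta^\eta\|_2^2=c$; this forces $c=(2\pi)^n/d_\sigma$). So your method is correct, but you should carry the $d_\sigma^{-1}$ explicitly and note that the constant in the lemma as printed (and, correspondingly, the claim that $V_{B_\sigma}$ is orthonormal rather than orthogonal) is off by this factor under the stated normalization; asserting that the constants ``reproduce the stated constant'' would make your write-up wrong at the last line.
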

In particular, $V_\zeta^\eta\in L^2(G^\times).$ Let $V_\sigma^\lambda=\overline{\text{span}}
\{V_\zeta^\eta:\zeta,\eta\in \mathcal{H}_\sigma^2\}$ and set $\Psi_{\alpha,j}^\sigma=\phi_\alpha^\lambda\otimes e_j^\sigma.$
Since $B_\sigma^\lambda=\{\Psi_{\alpha,j}^\sigma~:\alpha\in\mathbb Z_+^n, 1\leq j \leq d_\sigma\}$ forms an
orthonormal basis for $\mathcal{H}_\sigma^2,$ by Lemma \ref{lemma50}, we infer that
\[V_{B_\sigma^\lambda}=\left\{V_{\Psi_{\alpha_1,j_1}^\sigma}^{\Psi_{\alpha_2,j_2}^\sigma}:~
\Psi_{\alpha_1,j_1}^\sigma,\Psi_{\alpha_2,j_2}^\sigma\in B_\sigma^\lambda\right\}\]
is an orthonormal basis for $V_\sigma^\lambda.$ The next result, which is followed as a corollary
of the Peter-Weyl theorem \cite{Su}, will be the desire decomposition of $L^2(G^\times).$
\begin{proposition}\label{prop51}
The set $\bigcup\limits_{\sigma\in\hat K}V_{B_\sigma^\lambda}$ is an orthonormal basis for $L^2(G^\times).$
\end{proposition}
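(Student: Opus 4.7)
The plan is to verify separately the two defining properties of an orthonormal basis: pairwise orthogonality of the elements of $\bigcup_{\sigma\in\hat K} V_{B_\sigma}$, and completeness of their linear span in $L^2(G^\times)$. Orthogonality inside a single $V_{B_\sigma}$ follows at once from Lemma \ref{lemma50}: plugging in $\zeta_l=\Psi_{\alpha_l,j_l}^\sigma$ and $\eta_l=\Psi_{\beta_l,i_l}^\sigma$ from the orthonormal basis $B_\sigma$ of $\mathcal H_\sigma^2$ collapses the right-hand side to the expected Kronecker deltas. Orthogonality between $V_{B_\sigma}$ and $V_{B_{\sigma'}}$ with $\sigma\neq \sigma'$ is where a genuinely new ingredient is needed, because the hypothesis of Lemma \ref{lemma50} requires all inputs in a common $\mathcal H_\sigma^2$.

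For the cross-$\sigma$ case I would use the explicit factorization
\[V_{\Psi_{\alpha,j}^\sigma}^{\Psi_{\beta,i}^\sigma}(z,k)=\langle \pi_1(z)\mu_1(k)\phi_\alpha,\phi_\beta\rangle\,\varphi_{ij}^\sigma(k),\]
which follows from $\rho_\sigma(z,k)=\pi_1(z)\mu_1(k)\otimes\sigma(k)$. Evaluating the $L^2(G^\times)$ pairing by Fubini, the $z$-integral is computed via the Fourier-Wigner orthogonality for the Schr\"odinger representation $\pi_1$ on $\mathbb H^n$; since $\mu_1(k)$ is unitary, the result is a constant multiple of $\delta_{\alpha\alpha'}\delta_{\beta\beta'}$, independent of $k$. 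The remaining $k$-integral is then $\int_K \varphi_{ij}^\sigma(k)\overline{\varphi_{i'j'}^{\sigma'}(k)}\,dk$, which vanishes when $\sigma\neq \sigma'$ by the Peter-Weyl orthogonality of matrix coefficients. This step is the main obstacle, mostly in the bookkeeping of $(2\pi)^n$ and $d_\sigma^{-1}$ constants and in checking that the metaplectic factor $\mu_1(k)$ leaves no residual $k$-dependence after the Heisenberg orthogonality is invoked.

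For completeness, I would rely on the identity
\[\int_{G^\times} g(z,k)\,V_\zeta^\eta(z,k)\,dz\,dk=\langle W_\sigma(g)\zeta,\eta\rangle_{\mathcal H_\sigma^2},\qquad g\in L^1\cap L^2(G^\times),\ \zeta,\eta\in\mathcal H_\sigma^2,\]
which is immediate by unwinding the definitions of the Weyl and Fourier-Wigner transforms. If $g\in L^2(G^\times)$ has vanishing coefficient against every element of $\bigcup V_{B_\sigma}$, specialising $\zeta$ and $\eta$ to range over $B_\sigma$ forces every matrix entry of $W_\sigma(g)$ in the orthonormal basis $B_\sigma$ to vanish, so $W_\sigma(g)=0$ for every $\sigma\in\hat K$. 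The Plancherel formula (\ref{exp51}) then gives $\|g\|_{L^2(G^\times)}=0$, so the linear span of $\bigcup V_{B_\sigma}$ is dense in $L^2(G^\times)$ and the proof is complete.
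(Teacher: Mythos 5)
Your argument is correct and is essentially the proof the paper intends: the paper gives no written proof, asserting the proposition only as a corollary of the Peter--Weyl theorem, and Peter--Weyl is exactly what you invoke for the one genuinely nontrivial point (orthogonality of $V_{B_\sigma}$ against $V_{B_{\sigma'}}$ for $\sigma\neq\sigma'$), after the factorization $V_{\Psi^\sigma_{\alpha,j}}^{\Psi^\sigma_{\beta,i}}(z,k)=\langle\pi_1(z)\mu_1(k)\phi_\alpha,\phi_\beta\rangle\,\varphi_{ij}^\sigma(k)$ reduces the $z$-integral to the Heisenberg Moyal identity with a $k$-independent outcome. Your completeness step via the Plancherel formula (\ref{exp51}) is also sound; only note that orthogonality of $g$ to $V_\zeta^\eta$ in the Hermitian $L^2$ pairing forces the vanishing of the matrix entries of $W_\sigma(\bar g)$ rather than of $W_\sigma(g)$ (which still yields $g=0$), and that Lemma \ref{lemma50} gives $\|V_{\Psi}^{\Psi'}\|_2^2=(2\pi)^n$, so the family is orthogonal but orthonormal only after the normalization $(2\pi)^{-n/2}$ that the paper itself suppresses.
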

Since $V_{B_\sigma^\lambda}$ is an orthonormal basis for $V_\sigma^\lambda,$ by
Proposition \ref{prop51}, we infer that $L^2(G^\times)=\bigoplus\limits_{\sigma\in\hat K}V_\sigma^\lambda.$
We shall call this as the \textbf{\emph{Fourier-Wigner decomposition}} and $V_\sigma^\lambda$ as
\textbf{\emph{Fourier-Wigner representation}} of $G^\times.$

\begin{remark}\label{rk1}
Though the decomposition in Proposition \ref{prop51} is being followed by Peter-Weyl
theorem, it is quite finer than the usual Peter-Weyl decomposition of function on $K,$
due to the presence of the metaplectic representation. And as an effect, even if
$g\in  L^2(G^\times)$ is $K$-bi-invariant on $G^\times,$ it need not fall into the
trivial Fourier-Wigner representation. This fact can be explained more explicitly
via the following example.
\end{remark}
Consider the one dimension Heisenberg motion group $\mathbb H^1\rtimes U(1).$
Realize $U(1)\cong S^1.$ Let $(z,t,e^{i\theta})\in \mathbb H^1\rtimes U(1).$ Then for each
$(\lambda,\alpha)\in\mathbb{R}^*\times\mathbb{Z},$ the unitary irreducible representations
$(\rho_\alpha^\lambda,L^2(\mathbb{R}))$ of $\mathbb H^1\rtimes U(1)$ can be defined by
\[\rho_\alpha^\lambda(z,t,e^{i\theta})=e^{-i\alpha\theta}\pi_\lambda(z,t)\mu_\lambda(e^{i\theta}).\]
In fact, the action of $\rho_\alpha^\lambda$ on Hermite function $\phi_\beta^\lambda,$
where $\beta\in \mathbb{Z}_+,$ will be given by
\begin{align}\label{exp1}
\rho_\alpha^\lambda(z,t,e^{i\theta})\phi_\beta^\lambda=e^{-i(\alpha-\beta)\theta}\pi_\lambda(z,t)\phi_\beta^\lambda.
\end{align}
For more details, see \cite{ST}. By Proposition \ref{prop51}, we have
$L^2(\mathbb C \times S^1)=\oplus_{\alpha\in \mathbb{Z}}V_\alpha,$ where
$$V_{\alpha'}=\{\varphi\in L^2(\mathbb C \times S^1):W_\alpha^\lambda(\varphi)=0 \text{ for all }\alpha\neq \alpha'\}.$$
From (\ref{exp1}) it follows that
\begin{align*}
\langle\rho_\alpha^\lambda(z,e^{i\theta})\phi_\beta^\lambda,\phi_\gamma^\lambda\rangle
&=e^{-i(\alpha-\beta)\theta}\langle\pi_\lambda(z)\phi_\beta^\lambda,\phi_\gamma^\lambda\rangle \\
&=e^{-i(\alpha-\beta)\theta}\Phi_{\beta,\gamma}^\lambda(z),
\end{align*}
where $\Phi_{\beta,\gamma}^\lambda$ is the special Hermite function.
Denote $\tilde{\Phi}_{\beta,\gamma}^{\alpha,\lambda}(z,e^{i\theta})=e^{-i(\alpha-\beta)\theta}\Phi_{\beta,\gamma}^\lambda(z).$
Then $\{\tilde{\Phi}_{\beta,\gamma}^{\alpha,\lambda} :\beta,\gamma\in \mathbb{Z}_+\}$ will be
an orthonormal basis for $V_\alpha^\lambda.$ In particular, corresponding to the trivial representation,
$\tilde{\Phi}_{\beta,\gamma}^{0,\lambda}(z,e^{i\theta})
=e^{i\beta\theta}\Phi_{\beta,\gamma}^\lambda(z);$ $\beta,\gamma\in \mathbb{Z}_+,$ are basis
elements of $V_0^\lambda.$ Thus, the presence of $e^{i\beta\theta}$ in the basis,
concludes that an arbitrary $h\in L^2(\mathbb{C})$ need not be contained in $V_0^\lambda.$
To be explicit, consider a finitely supported function $h\in L^2(\mathbb C).$
Further, define a function $g$ on $\mathbb C \times S^1$ by $g(z,e^{i\theta})=h(z).$
The Weyl transform of $g$ is defined by
\[W_\alpha^\lambda(g)=\int_{\mathbb C \times [0,2\pi]}g(z,e^{i\theta})\rho_\alpha^\lambda(z,e^{i\theta})dzd\theta.\]
From (\ref{exp1}), we have
\[W_\alpha^\lambda(g)\phi_\beta^\lambda=W_\lambda(h)\phi_\beta^\lambda\cdot \int_0^{2\pi}e^{-i(\alpha-\beta)\theta} d\theta.\]
Hence, for each $\alpha\in\mathbb Z,$ $W_\alpha^\lambda(g)$ can have rank at most one. Further
$W_\beta^\lambda(g)\phi_\beta^\lambda=W_\lambda(h)\phi_\beta^\lambda$ for
$\beta\in \mathbb{Z}_+.$ Since $h$ is a non-zero function with finite support,
$W_\lambda(h)$ cannot be a finite rank operator, see \cite{GS,NR}. Therefore, there exist
infinitely many $\beta$ such that $W_\beta^\lambda(g)\Phi_\beta^\lambda\neq 0.$
Hence, $g$ is not contained in any $V_\alpha^\lambda.$ In fact, $g$ fails to be a member of any finite
union of $V_\alpha^\lambda$'s.

\smallskip

The above discussion brings forward the following question. Does there exist a non-trivial
finitely supported function $g\in L^1(G^\times)$ whose Weyl transform $W_\sigma^\lambda(g)$
has finite rank only for finitely many $\sigma$ and $W_\sigma^\lambda(g)=0$ otherwise?
The non-existence of such a function is guaranteed by Proposition \ref{prop58}.

\smallskip

Next, we prove the inversion formula for the Weyl transform $W_{\sigma}^\lambda$
which is a key ingredient while proving our main result. For this, we need the
fact that
\begin{equation}\label{exp52}
\rho_{\sigma}^\lambda(z,k_1)\rho_{\sigma}^\lambda (w,k_2)=e^{-\frac{i\lambda}{2} \text{Im} (k_1 w.\bar{z})}
\rho_{\sigma}^\lambda (z+k_1 w ,k_1k_2),
\end{equation}
where $(z,k_1),(w,k_2)\in G^\times.$
\begin{theorem}$(\textbf{Inversion formula})$\label{th50}
Let $g\in L^1\cap L^2(G^\times).$ Then
\begin{equation}\label{exp55}
g(z,k)=(2\pi)^{-n}|\lambda|^n\sum\limits_{\sigma\in \hat{K}}d_\sigma \text{tr}
(W_{\sigma}^\lambda(g)(\rho_{\sigma}^\lambda)^*(z,k)),
\end{equation}
where the series converges in $L^2(G^\times).$
\end{theorem}

\begin{proof}
For $(z,k_1)\in G^\times,$ we have
\begin{align*}
W_{\sigma}^\lambda(g)(\rho_{\sigma}^\lambda)^*(z,k_1)&=\int_{G^\times} g(w,k_2)\rho_{\sigma}^\lambda(w,k_2)
\rho_{\sigma}^\lambda(-k_1^{-1}z,k_1^{-1}) dw dk_2 \\
&=\int_{G^\times} g(w,k_2)e^{\frac{i\lambda}{2} \text{Im} (k_2k_1^{-1}z.\bar{w})}
\rho_{\sigma}^\lambda (w-k_2k_1^{-1}z,k_2k_1^{-1}) dw dk_2.
\end{align*}
Hence $\text{tr}\left(W_{\sigma}^\lambda(g)(\rho_{\sigma}^\lambda)^*(z,k_1)\right)$ is equal to
\begin{align*}
\sum\limits_{\substack {\gamma\in \mathbb{N}^n\\ 1\leq j\leq d_{\sigma}}} \int_{G^\times}
g(w,k_2)e^{\frac{i\lambda}{2} \text{Im} (k_2k_1^{-1}z.\bar{w})}
\left\langle \rho_{\sigma}^\lambda(w-k_2k_1^{-1}z,k_2k_1^{-1})
(\phi_\gamma^\lambda \otimes e_j^\sigma),\phi_\gamma^\lambda \otimes e_j^\sigma \right\rangle d w  ds.
\end{align*}
By (\ref{exp50}), the above expression takes the form
\begin{align*}
\sum\limits_{\substack{\gamma\in\mathbb{N}^n\\1\leq j\leq d_{\sigma}}}\sum_{|\alpha|=|\gamma|}\int_{K}
\eta_{\gamma \alpha}^\lambda(k_2k_1^{-1}) \int_{\mathbb{C}^n}g(w,k_2)e^{\frac{i\lambda}{2} \text{Im} (k_2k_1^{-1}z.\bar{w})}
\phi_{\alpha\gamma}^\lambda(w-k_2k_1^{-1}z)\varphi_{jj}^\sigma(k_2k_1^{-1})dw dk_2,
\end{align*}
where $\phi_{\alpha\gamma}^\lambda(x)=\langle\pi_\lambda(x)\phi_\alpha^\lambda,\phi_\gamma^\lambda\rangle.$
Then by the Peter-Weyl theorem (inversion) for the compact groups, we derive that
\begin{align}\label{exp53}
\sum\limits_{\sigma\in \hat{K}}d_\sigma \text{tr}(W_{\sigma}^\lambda(g)(\rho_{\sigma}^\lambda)^*(z,k_1))
&=\sum\limits_{\gamma\in \mathbb{N}^n} \sum_{|\alpha|=|\gamma|}
\eta_{\gamma \alpha}^\lambda(\mathsf{e}) \int_{\mathbb{C}^n}g(w,k_1)e^{\frac{i\lambda}{2} \text{Im} (z.\bar{w})}
\phi_{\alpha\gamma}^\lambda(w-z)dw  \nonumber \\
&=\sum\limits_{\gamma\in \mathbb{N}^n}\int_{\mathbb{C}^n}
g(w,k_1)e^{\frac{i\lambda}{2} \text{Im} (z.\bar{w})}\phi_{\gamma \gamma}^\lambda(w-z)dw, \nonumber
\end{align}
where $\mathsf{e}$ is the identity element in $K.$ Thus in view of the inversion formula for the Weyl transform on the
Heisenberg group, we infer that
\begin{align}
\sum\limits_{\sigma\in \hat{K}}d_\sigma \text{tr}(W_{\sigma}^\lambda(g)(\rho_{\sigma}^\lambda)^*(z,k_1))
=(2\pi)^n |\lambda|^{-n} g(z,k_1).
\end{align}
\end{proof}

For simplicity, we assume $\lambda=1$ and denote $\rho_\sigma(z,k)=\rho_\sigma^1(z,k),$ $W_{\sigma}=W_{\sigma}^{1}.$
Further, throughout this section, we shall assume $A$ is a Lebesgue measurable subset of
$\mathbb{C}^n$ with finite measure. Next, we define a set of orthogonal projection operators
which is core in formulating a problem analogous to Benedicks-Amrein-Berthier type theorem.

\smallskip

Let $\sigma\in\hat{K}$ and $\mathcal B_{N_\sigma}$ be an $N_\sigma$ dimensional subspace of
$\mathcal{H}_\sigma^2.$ Then, there exists an orthonormal basis $\{\psi_l^\sigma :l\in \mathbb{N}\}$
of $\mathcal{H}_\sigma^2$ such that $\mathcal B_{N_\sigma}=\text{span }\left\{\psi_l^{\sigma}: 1\leq l\leq N_\sigma \right\}.$
Define an orthogonal projection $P_{N_\sigma}$ of $\mathcal{H}_\sigma^2$ onto
$\mathcal{R}(P_{N_\sigma})=\mathcal B_{N_\sigma}.$ Consider a finite subset $J$ of $\hat{K}$
and let $N=\max\limits_{\sigma\in J}N_\sigma.$ Now, we define a pair of orthogonal projections $E_A$ and $F_N$ of
$L^2(G^\times)$ by
\[ E_A g =\chi_{A\times K}~g \quad \mbox{and} \quad
W_{\sigma}(F_N g)=\begin{cases}
                   P_{N_\sigma} W_\sigma(g) &\text{if }\sigma\in J,\\
                   0 &\text{otherwise,}
                 \end{cases}\]
where $\chi_{A\times K}$ denotes the characteristic function of $A\times K.$ Then, it is easy to see that
$\mathcal{R}(E_A)=\{g\in L^2(G^\times): g = g~\chi_{A\times K}\}$ and \[\mathcal{R}(F_N)
=\{g\in L^2(G^\times):\mathcal{R}(W_\sigma(g))\subseteq\mathcal B_{N_\sigma}\text{ for }\sigma\in J
\text{ and }\mathcal{R}(W_{\sigma}(g))=0\text{ for }\sigma\notin J\}.\]

Now, we derive a key lemma that enables us to recognize $E_AF_N$  as an integral operator.

\begin{lemma}\label{lemma51}
The operator $E_AF_N$ is an integral operator on $L^2(G^\times).$
\end{lemma}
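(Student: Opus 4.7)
The plan is to write $E_AF_N$ as an integral operator by applying the inversion formula of Theorem \ref{th50} to the function $F_Ng$, reading off the resulting integral kernel, and checking that this kernel is square integrable on $G^\times\times G^\times$.

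First I would use Theorem \ref{th50} (with $\lambda=1$) to represent $F_Ng$ pointwise. Because $W_\sigma(F_Ng)$ vanishes for $\sigma\neq\sigma_o$ and equals $P_NW_{\sigma_o}(g)$ for $\sigma=\sigma_o$, the inversion formula collapses to a single term
\[
F_Ng(z,k)=(2\pi)^{-n}d_{\sigma_o}\,\text{tr}\bigl(P_NW_{\sigma_o}(g)\,\rho_{\sigma_o}^{*}(z,k)\bigr).
\]
Next, expand the trace against the orthonormal basis $\{\psi_l^{\sigma_o}\}_{l=1}^{N}$ of $\mathcal{R}(P_N)$, using cyclicity to write $\text{tr}(P_NA)=\sum_{l=1}^N\langle A\psi_l^{\sigma_o},\psi_l^{\sigma_o}\rangle$, and insert the definition of $W_{\sigma_o}(g)$. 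Swapping the (finite) sum with the integral, this yields
\[
F_Ng(z,k)=(2\pi)^{-n}d_{\sigma_o}\sum_{l=1}^{N}\int_{G^\times}g(w,k_2)\,\bigl\langle\rho_{\sigma_o}(w,k_2)\rho_{\sigma_o}^{*}(z,k)\psi_l^{\sigma_o},\psi_l^{\sigma_o}\bigr\rangle\,dw\,dk_2.
\]
Multiplying by $\chi_{A\times K}(z,k)$ then identifies $E_AF_Ng$ with an integral operator whose kernel is
\[
\mathcal{K}\bigl((z,k),(w,k_2)\bigr)=(2\pi)^{-n}d_{\sigma_o}\,\chi_{A\times K}(z,k)\sum_{l=1}^{N}V_{\rho_{\sigma_o}^{*}(z,k)\psi_l^{\sigma_o}}^{\psi_l^{\sigma_o}}(w,k_2).
\]
At this stage one may further simplify the representation product via (\ref{exp52}) to make the kernel explicit, but this is cosmetic.

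To justify that $E_AF_N$ is a genuine integral operator on $L^2(G^\times)$, I would verify that $\mathcal{K}\in L^2(G^\times\times G^\times)$, which also gives the Hilbert-Schmidt property as a bonus. For fixed $(z,k)$, Cauchy-Schwarz bounds the modulus-squared of the inner sum by $N\sum_{l=1}^{N}|V_{\rho_{\sigma_o}^{*}(z,k)\psi_l^{\sigma_o}}^{\psi_l^{\sigma_o}}(w,k_2)|^2$; integrating in $(w,k_2)$ and applying Lemma \ref{lemma50} (with $\zeta_1=\zeta_2=\rho_{\sigma_o}^{*}(z,k)\psi_l^{\sigma_o}$ and $\eta_1=\eta_2=\psi_l^{\sigma_o}$, all unit vectors since $\rho_{\sigma_o}^{*}(z,k)$ is unitary) produces a constant multiple of $N^2$. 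Integrating over $(z,k)\in A\times K$ then contributes the finite factor $|A|$, so $\|\mathcal{K}\|_{L^2}^2\lesssim d_{\sigma_o}^{2}N^{2}|A|<\infty$.

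The only real technical point is the justification of interchanging the sum over $l$, the trace, and the integral defining $W_{\sigma_o}(g)$; this is routine because $N$ is finite and the inner trace is a finite sum of bounded-sesquilinear pairings of unit vectors, so Fubini applies on integrable/$L^2$ slices. I do not anticipate a serious obstacle beyond bookkeeping with the metaplectic/representation factors; identifying the correct kernel via the inversion formula is the core of the argument.
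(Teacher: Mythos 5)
Your proposal is correct and follows essentially the same route as the paper: apply the inversion formula to $F_Ng$ (which collapses to the single $\sigma_o$ term), insert the definition of $W_{\sigma_o}(g)$, and read off the kernel $\mathcal{K}((z,k),(w,k_2))=a_{\sigma_o}\chi_{A\times K}(z,k)\,\mathrm{tr}\bigl(P_N\rho_{\sigma_o}(w,k_2)\rho_{\sigma_o}^{*}(z,k)\bigr)$. Your additional square-integrability check via Lemma \ref{lemma50} is sound (your Cauchy--Schwarz bound gives $N^2$ where the exact orthogonality computation gives $N$), but it is not needed for this lemma --- the paper defers that computation to Lemma \ref{lemma52}.
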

\begin{proof}
Let $g\in L^2(G^\times)$. By inversion formula (\ref{exp55}) we get
\begin{align*}
(F_Ng)(z,k_1)&=\sum_{\sigma\in\hat{K}}a_\sigma\text{ tr}\left(W_\sigma(F_Ng) \rho_\sigma^*(z,k_1)\right)\\
&=\sum_{\sigma\in J}a_\sigma\text{ tr}\left(P_{N_\sigma}W_\sigma (g)\rho_\sigma^*(z,k_1)\right)\\
&=\sum_{\sigma\in J}a_\sigma\int_{K}\int_{\mathbb{C}^n} g(w,k_2) \text{ tr}\left(P_{N_\sigma} \rho_\sigma(w,k_2)
\rho_\sigma^*(z,k_1)\right)dw dk_2,
\end{align*}
where $a_\sigma=(2\pi)^{-n}d_\sigma.$
Hence,
\begin{align*}
(E_AF_Ng)(z,k_1)&=\chi_{A\times K}(z,k_1)(F_Ng)(z,k_1) \\
&=\int_{K}\int_{\mathbb{C}^n} g( w ,k_2)\mathcal{K}((z,k_1),( w ,k_2))d w  dk_2,
\end{align*}
where $\mathcal{K}\left((z,k_1),(w,k_2)\right)=\sum_{\sigma\in J}a_\sigma\chi_{A\times K}(z,k_1)\text{ tr}
\left(P_{N_\sigma} \rho_\sigma(w,k_2)\rho_\sigma^*(z,k_1)\right).$
\end{proof}

Further, the integral operator $E_AF_N$ is a Hilbert-Schmidt operator and
satisfies the following dimension condition.
\begin{lemma}\label{lemma52}
$E_AF_N$ is a Hilbert-Schmidt operator with $\|E_AF_N\|_{HS}^2\leq c_J m(A)N$,
where $c_J=(2\pi)^n m(K)|J|\sum_{\sigma\in J} a_\sigma^2<\infty.$
\end{lemma}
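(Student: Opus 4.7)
The plan is to compute $\|E_AF_N\|_{HS}^2$ as the $L^2$-norm squared of the kernel $\mathcal{K}$ furnished by Lemma \ref{lemma51}, reducing it via (\ref{exp52}) and an invariant change of variables to a quantity that is evaluated directly by the orthogonality relation of Lemma \ref{lemma50}. Explicitly, by Lemma \ref{lemma51},
\[
\|E_AF_N\|_{HS}^2 = a_{\sigma_o}^2\int_{A\times K}\int_{G^\times}\bigl|\text{tr}(P_N\rho_{\sigma_o}(w,k_2)\rho_{\sigma_o}^*(z,k_1))\bigr|^2\,dw\,dk_2\,dz\,dk_1,
\]
so the Hilbert-Schmidt property will follow as a by-product once this integral is shown to be finite.

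Next, since the inverse of $(z,k_1)$ in $G^\times$ is $(-k_1^{-1}z,k_1^{-1})$, we have $\rho_{\sigma_o}^*(z,k_1) = \rho_{\sigma_o}(-k_1^{-1}z,k_1^{-1})$, so (\ref{exp52}) yields
\[
\rho_{\sigma_o}(w,k_2)\rho_{\sigma_o}^*(z,k_1) = e^{\frac{i}{2}\text{Im}(k_2k_1^{-1}z\cdot\bar w)}\rho_{\sigma_o}(w-k_2k_1^{-1}z,\,k_2k_1^{-1}).
\]
Taking moduli annihilates the unimodular phase, and for each fixed $(z,k_1)$ the substitution $w'=w-k_2k_1^{-1}z$, $k'=k_2k_1^{-1}$ has unit Jacobian by translation invariance of Lebesgue measure on $\mathbb{C}^n$ together with bi-invariance of Haar measure on the compact group $K$. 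Consequently the inner integral collapses to the $(z,k_1)$-independent quantity $\int_{G^\times}|\text{tr}(P_N\rho_{\sigma_o}(w',k'))|^2\,dw'\,dk'$, and the outer integration against $\chi_{A\times K}$ contributes a factor of $m(A)m(K)$.

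Finally, writing $P_N = \sum_{l=1}^N\langle\cdot,\psi_l^{\sigma_o}\rangle\psi_l^{\sigma_o}$ gives
\[
\text{tr}(P_N\rho_{\sigma_o}(w',k')) = \sum_{l=1}^N V_{\psi_l^{\sigma_o}}^{\psi_l^{\sigma_o}}(w',k'),
\]
and expanding the modulus squared and applying Lemma \ref{lemma50} to each cross term yields
\[
\int_{G^\times}\bigl|\text{tr}(P_N\rho_{\sigma_o}(w',k'))\bigr|^2\,dw'\,dk' = (2\pi)^n\sum_{l,m=1}^N|\langle\psi_l^{\sigma_o},\psi_m^{\sigma_o}\rangle|^2 = (2\pi)^nN
\]
by orthonormality of $\{\psi_l^{\sigma_o}\}$. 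Multiplying by $a_{\sigma_o}^2m(A)m(K)$ gives precisely $c_{\sigma_o}m(A)N$. The only delicate step is the invariant change of variables that decouples the $(w,k_2)$-integration from $(z,k_1)$; everything else is bookkeeping and a single appeal to Lemma \ref{lemma50}.
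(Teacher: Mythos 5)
Your proof is correct and follows essentially the same route as the paper: compute the Hilbert--Schmidt norm as the $L^2$-norm of the kernel from Lemma \ref{lemma51} and evaluate the resulting inner integral as $(2\pi)^n N$ via the orthogonality relation of Lemma \ref{lemma50}. The only (harmless) difference is in how the $(z,k_1)$-dependence is removed: you invoke (\ref{exp52}) together with an invariant change of variables, whereas the paper absorbs $\rho_{\sigma_o}^*(z,k_1)$ into the vectors by setting $\eta_l^{\sigma_o}=\rho_{\sigma_o}^*(z,k_1)\psi_l^{\sigma_o}$ and observing that these remain orthonormal.
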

\begin{proof}
From Lemma \ref{lemma51} it follows that
\begin{align*}
\|E_AF_N\|_{HS}^2 & =\int_{G^\times}\int_{G^\times}|\mathcal{K}((z,k_1),( w ,k_2))|^2 dw dk_2 dz dk_1 \\
& =\int_{G^\times}\int_{G^\times}\left| \sum_{\sigma\in J}a_\sigma\chi_{A\times K}(z,k_1)\text{ tr}
\left(P_{N_\sigma} \rho_\sigma(w,k_2)\rho_\sigma^*(z,k_1)\right)\right|^2 dw dk_2 dz dk_1.
\end{align*}
If the cardinality of $J$ is denoted by $|J|,$ from H{\"o}lder's inequality, we get
\begin{align}\label{exp62}
\|E_AF_N\|_{HS}^2\leq |J| & \sum_{\sigma\in J} a_\sigma^2 \int_{G^\times}|\chi_{A\times K}(z,k_1)|^2 \\
& \int_{G^\times}|\text{tr}\left(P_{N_\sigma} \rho_\sigma(w,k_2)\rho_\sigma^*(z,k_1)\right)|^2 dw dk_2dz dk_1. \nonumber
\end{align}
Now, we shall simplify the inner integral
\begin{align*}
&\int_{G^\times}|\text{tr} \left(P_{N_\sigma} \rho_\sigma(w,k_2)\rho_\sigma^*(z,k_1)\right)|^2 dw dk_2\\
=&\int_{G^\times}\Big|\sum\limits_{1\leq l\leq N_\sigma} \langle\rho_\sigma (w,k_2)\rho_\sigma^*(z,k_1)
\psi_l^\sigma,\psi_l^\sigma\rangle \Big|^2 dw dk_2 \\
=&\int_{G^\times}\Big|\sum\limits_{1\leq l\leq N_\sigma} \langle\rho_\sigma(w,k_2)
\eta_l^\sigma,\psi_l^\sigma\rangle \Big|^2 dw dk_2,
\end{align*}
where $\eta_l^\sigma=\rho_\sigma^*(z,k_1)\psi_l^\sigma\in \mathcal{H}_\sigma^2.$
The above integral can be written in terms of Fourier-Wigner transform by
\begin{align*}
&\int_{G^\times}\Big| \sum\limits_{1\leq l\leq N_\sigma} \langle\rho_\sigma (w,k_2)
\eta_l^\sigma,\psi_l^\sigma\rangle \Big|^2 dw dk_2
=(2\pi)^n\int_{G^\times}\Big| \sum\limits_{1\leq l\leq N_\sigma}
V_{\eta_l^\sigma}^{\psi_l^\sigma}(w,k_2)\Big|^2 dw dk_2 \\
=& \,(2\pi)^n\sum\limits_{1\leq l_1,l_2\leq N_\sigma}\int_{G^\times} V_{\eta_{l_1}^\sigma}^{\psi_{l_1}^\sigma}(w,k_2)
\overline{V_{\eta_{l_2}^\sigma}^{\psi_{l_2}^\sigma}(w,k_2)}dw dk_2.
\end{align*}
Since, \[\langle\eta_{l_1}^\sigma,\eta_{l_2}^\sigma\rangle=\langle\rho_\sigma^*(z,k_1)
\psi_{l_1}^\sigma,\rho_\sigma^*(z,k_1)\psi_{l_2}^\sigma\rangle=\langle
\psi_{l_1}^\sigma,\psi_{l_2}^\sigma\rangle=\delta_{l_1l_2},\]
by Lemma \ref{lemma50}, we have
\begin{align}\label{exp64}
&\int_{G^\times}|\text{tr} \left(P_{N_\sigma} \rho_\sigma(w,k_2)\rho_\sigma^*(z,k_1)\right)|^2 dw dk_2 \nonumber \\
=& \,(2\pi)^n \sum\limits_{1\leq l_1,l_2\leq N_\sigma}\langle\eta_{l_1}^\sigma,\eta_{l_2}^\sigma\rangle
\overline{\langle\psi_{l_1}^\sigma,\psi_{l_2}^\sigma\rangle}
=(2\pi)^n N_\sigma.
\end{align}
Thus, from (\ref{exp62}) and (\ref{exp64}) we get
$$\|E_AF_N\|_{HS}^2 \leq (2\pi)^n m(A)m(K) N|J|\sum\limits_{\sigma\in J} a_\sigma^2<\infty,$$
where $N=\max\limits_{\sigma\in J}N_\sigma$ as defined above.
\end{proof}

We need the following result that describes an interesting property of Lebesgue
measurable sets \cite{AB}. Denote $ w  A=\{z\in \mathbb{C}^n:z- w  \in A\}.$
\begin{lemma}\cite{AB}\label{lemma53}
Let $B$ be a measurable set in $\mathbb{C}^n$ with $0<m(B)<\infty.$ If $B_0$
is a measurable subset of $B$ with $m(B_0)>0,$ then for each $\epsilon>0$ there exists
$ w \in \mathbb C^n$ such that
\[m(B)<m(B\cup w B_0)<m(B)+\epsilon.\]
\end{lemma}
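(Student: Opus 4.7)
The plan is to recast the lemma as an intermediate-value statement for the overlap function
\[
f(w) := m\bigl(wB_0 \cap B\bigr), \qquad w \in \mathbb{C}^n.
\]
Since $m(wB_0) = m(B_0)$ and $wB_0 \cup B$ is the disjoint union of $B$ with $wB_0 \setminus B$, one has
\[
m(B \cup wB_0) \;=\; m(B) + m(wB_0) - m(wB_0 \cap B) \;=\; m(B) + m(B_0) - f(w).
\]
Hence the inequalities $m(B) < m(B\cup wB_0) < m(B)+\epsilon$ are equivalent to finding $w$ with $m(B_0)-\epsilon < f(w) < m(B_0)$, and the whole lemma reduces to producing such a $w$.

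I would then establish three properties of $f$. First, $f(0) = m(B_0) > 0$. Second, $f$ is continuous on $\mathbb{C}^n$: writing
\[
f(w) \;=\; \int_{\mathbb{C}^n} \chi_{B}(z)\,\chi_{B_0}(z-w)\,dz,
\]
continuity is the standard $L^1$-continuity of translation applied to $\chi_{B_0}\in L^1(\mathbb{C}^n)$ (which uses $m(B_0)<\infty$). Third, $f(w) \to 0$ as $|w|\to\infty$: given $\delta>0$, one can approximate $B$ from inside by a bounded measurable $B' \subseteq B$ with $m(B\setminus B')<\delta$, and once $|w|$ is large enough, $wB_0$ is disjoint from $B'$, giving $f(w) \le \delta$. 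This step uses $m(B)<\infty$ in an essential way.

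Combining these, the image $f(\mathbb{C}^n)$ is a connected subset of $[0,m(B_0)]$ that contains $m(B_0)$ and has $0$ as a limit point, hence equals all of $[0,m(B_0)]$. Choosing any $\delta \in (0,\min(\epsilon,m(B_0)))$, there is $w\in\mathbb{C}^n$ with $f(w) = m(B_0)-\delta$. Substituting into the identity above yields $m(B\cup wB_0) = m(B)+\delta$, which lies strictly between $m(B)$ and $m(B)+\epsilon$.

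There is no real obstacle here: the argument is a soft intermediate-value argument once continuity and decay at infinity of $f$ are in hand. The only subtle point is that both finiteness conditions $m(B)<\infty$ and $m(B_0)<\infty$ are used, the first to force $f(w)\to 0$ and the second to get $L^1$-continuity of the translation $w\mapsto \chi_{B_0}(\,\cdot\,-w)$. Neither the strict lower bound nor the upper bound can be dropped: the lower bound forces the translated copy $wB_0$ to poke outside $B$, while the upper bound keeps the protrusion arbitrarily small, and both are delivered simultaneously by the single choice of $\delta$.
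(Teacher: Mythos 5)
The paper itself does not prove this lemma: it is quoted from Amrein--Berthier \cite{AB}, and your argument is essentially the classical proof from that source --- reduce via the identity $m(B\cup wB_0)=m(B)+m(B_0)-f(w)$ to an intermediate-value statement for the overlap function $f(w)=m(wB_0\cap B)$, then use $f(0)=m(B_0)$, continuity, and decay at infinity. The reduction, the continuity step, and the final intermediate-value step are all correct.

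One step, however, fails as written: the claim that for a bounded $B'\subseteq B$ the translate $wB_0$ is disjoint from $B'$ once $|w|$ is large. Finiteness of $m(B_0)$ does not make $B_0$ bounded, and if $B_0$ is unbounded then $wB_0=w+B_0$ meets any fixed bounded set for arbitrarily large $|w|$ (take $w=-b$ with $b\in B_0$ and $|b|$ large). To get $f(w)\to 0$ you must truncate $B_0$ as well: pick bounded $B'\subseteq B$ and $B_0'\subseteq B_0$ with $m(B\setminus B')<\delta$ and $m(B_0\setminus B_0')<\delta$; then $f(w)\le m\bigl((w+B_0')\cap B'\bigr)+2\delta=2\delta$ once $|w|$ exceeds the sum of the two radii. (More cleanly, $f$ is the convolution of two $L^2$ functions, $\chi_B$ and the reflection of $\chi_{B_0}$, hence lies in $C_0(\mathbb{C}^n)$, giving continuity and decay simultaneously.) A second, harmless, overstatement: the image of $f$ need not be all of $[0,m(B_0)]$, since $0$ may not be attained; connectedness only yields that the image contains $(\inf f,\,m(B_0)]\supseteq(0,m(B_0))$, but that suffices because you evaluate $f$ at the interior value $m(B_0)-\delta$.
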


We also need the following basic fact about the orthogonal projection,
which help in deciding the disjointness of the projections $E_A$ and $F_N$
while $m(A)<\infty.$

For given orthogonal projections $E$ and $F$ of a Hilbert
space $\mathcal H,$ let $E\cap F$ denote the orthogonal projection of $\mathcal H$
onto $\mathcal{R}(E)\cap \mathcal{R}(F).$ Then
\begin{align}\label{exp74}
\| E\cap F \|_{HS}^2=\dim\mathcal{R}( E\cap F)\leq \| EF \|_{HS}^2.
\end{align}
Let $F_{N}^{\perp}=I-F_N,$ and $A^c$ be the complement of $A.$

\begin{proposition}\label{prop52}
Let $A$ be a measurable subset of $\mathbb{C}^n$ of finite Lebesgue measure.
Then the projection $E_A\cap F_N=0$.
\end{proposition}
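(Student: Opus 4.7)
\medskip
\noindent\textit{Proof plan.}
Suppose toward contradiction that $V := \mathcal{R}(E_A) \cap \mathcal{R}(F_N) \neq \{0\}$. By Lemma \ref{lemma52} and the projection-dimension bound (\ref{exp74}), $d := \dim V \leq c_{\sigma_o}\, m(A)\, N < \infty$. The plan is to adapt the Hilbert-space Amrein--Berthier argument: construct a one-parameter family of unitaries on $L^2(G^\times)$ that preserves $\mathcal{R}(F_N)$ while shifting the support of functions in $\mathcal{R}(E_A)$ by an arbitrarily small amount in Lebesgue measure, then iterate until the HS bound is exceeded.

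Motivated by the cocycle (\ref{exp52}), I would define, for $w \in \mathbb{C}^n$,
\[
R_w g(z, k) := e^{-\frac{i}{2}\text{Im}(kw \cdot \bar z)}\, g(z - kw,\,k).
\]
A direct check using (\ref{exp52}) gives $W_\sigma(R_w g) = W_\sigma(g)\,\rho_\sigma(w, e)$; since right multiplication leaves the range of an operator invariant, $R_w$ is a unitary which preserves $\mathcal{R}(F_N)$. Its $z$-support lies in $A + Kw$, and the composition law $R_{w_1}R_{w_2} = e^{-\frac{i}{2}\text{Im}(w_2 \cdot \bar w_1)}\, R_{w_1 + w_2}$ identifies $\{R_w\}_{w \in \mathbb{C}^n}$ as a projective representation of $\mathbb{C}^n$ with the Heisenberg (Weyl) cocycle.

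Set $V_0 := V$, and at step $j \geq 1$ pick $w_j$ with $|w_j|$ small so that $R_{w_j} V_{j-1} \not\subseteq V_{j-1}$, setting $V_j := V_{j-1} + R_{w_j} V_{j-1} \subseteq \mathcal{R}(F_N)$, whence $\dim V_j \geq \dim V_{j-1} + 1$. The existence of such a $w_j$ follows from a Stone--von Neumann dichotomy: if $R_w V_{j-1} \subseteq V_{j-1}$ for all $w$ in a neighbourhood of $0$, then by real-analyticity of $R_w$ in $w$ and the projective composition law, $V_{j-1}$ is $R_w$-invariant for every $w \in \mathbb{C}^n$, hence carries a finite-dimensional projective representation of $\mathbb{C}^n$ with the Heisenberg cocycle. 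Lifting such a representation to $\mathbb{H}^n$ with nontrivial central character and invoking the Stone--von Neumann uniqueness theorem forces it to be infinite dimensional, so $V_{j-1} = \{0\}$, contradicting $V \subseteq V_{j-1}$. To close the argument, I would use Lemma \ref{lemma53} repeatedly (adding at the $j$-th step a translate with measure increment $< \varepsilon/2^j$) to choose $w_1, \ldots, w_j$ so that $m\bigl(A \cup \bigcup_{i \leq j}(A + K w_i)\bigr) < m(A) + (c_{\sigma_o} N)^{-1}$; Lemma \ref{lemma52} combined with (\ref{exp74}) then yields $\dim V_j < c_{\sigma_o}\, m(A)\, N + 1$, contradicting $\dim V_j \geq d + j$ once $j$ is large enough.

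The main obstacle is precisely this quantitative measure estimate on the $K$-orbit enlargement $A + \bigcup K w_i$: Lemma \ref{lemma53} directly handles only individual translates of subsets, so a careful inner-regularity/$L^1$-continuity argument is needed to approximate the $K$-orbit thickening by finitely many Lemma-\ref{lemma53}-controlled translates and to ensure the total measure growth stays below $(c_{\sigma_o} N)^{-1}$. A secondary point is the rigorous verification of the projective Stone--von Neumann step, but this is standard once one lifts to $\mathbb H^n$ and uses irreducibility of the Schr\"odinger representation $\pi_1$.
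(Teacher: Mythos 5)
Your overall strategy is the correct one and coincides with the paper's: bound $\dim\mathcal{R}(E_A\cap F_N)$ by $\|E_AF_N\|_{HS}^2=c_{\sigma_o}m(A)N$ via (\ref{exp74}) and Lemma \ref{lemma52}, then produce more linearly independent elements of some $\mathcal{R}(E_{\tilde A}\cap F_N)$ than this bound permits by twisted-translating a putative nonzero $g_0$. Your identity $W_\sigma(R_wg)=W_\sigma(g)\rho_\sigma(w,I)$ is correct and gives range-preservation very cleanly. But there is a genuine gap exactly at the point you flag as ``the main obstacle,'' and it is not repairable by inner regularity or $L^1$-continuity. Since $R_w$ shifts the $z$-support by $kw$, the smallest set of the form $B\times K$ (the only sets for which $E_B$ and Lemma \ref{lemma52} are available) containing $\mathrm{supp}(R_wg)$ is $(A+Kw)\times K$, where $A+Kw$ is the Minkowski sum of $A$ with the full sphere $|w|S^{2n-1}$. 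For a general finite-measure $A$ this sum can have infinite measure for \emph{every} $w\neq0$: take $A$ a disjoint union of balls $B(x_j,r_j)$ with widely separated centres, $\sum_jr_j^{2n}<\infty$ but $\sum_jr_j=\infty$ (e.g.\ $r_j=1/j$); then $A+Kw$ contains disjoint spherical shells of radius $|w|$ and thickness $2r_j$ about each $x_j$, of total measure $\gtrsim|w|^{2n-1}\sum_jr_j=\infty$. So the quantitative step of your plan, keeping $m\bigl(A\cup\bigcup_{i\le j}(A+Kw_i)\bigr)$ within $(c_{\sigma_o}N)^{-1}$ of $m(A)$, is unattainable, and Lemma \ref{lemma53}, which controls a single Euclidean translate of a subset, cannot be leveraged into it.

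The paper's resolution is to change the translation operator rather than the measure lemma: it uses $g_l(z,k)=e^{\frac{i}{2}\mathrm{Im}(z\cdot\bar w_l)}g_0(z-w_l,k)$, an honest translation by $w_l$ independent of $k$, whose support is the single translate $w_lA_0\times K$, so Lemma \ref{lemma53} applies verbatim and the measure bookkeeping closes. The price is that $W_{\sigma_o}(g_l)$ is no longer $W_{\sigma_o}(g_0)$ composed with a fixed unitary; instead one checks $\langle W_{\sigma_o}(g_l)\eta,\psi_p^{\sigma_o}\rangle=0$ for $p>N$ directly from $\rho_{\sigma_o}(z,k)\rho_{\sigma_o}(k^{-1}w,I)=e^{\frac{i}{2}\mathrm{Im}(z\cdot\bar w)}\rho_{\sigma_o}(z+w,k)$, and similarly that $W_\sigma(g_l)=0$ for $\sigma\neq\sigma_o$. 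Your Stone--von Neumann dichotomy is also an unnecessary detour: once the supports are under control, the linear independence of $g_0,\dots,g_s$ is immediate from the support increments, since $E_{A_m\setminus A_{m-1}}g_m\neq0$ while $E_{A_m\setminus A_{m-1}}g_l=0$ for $l<m$. If you want to keep your cleaner operator $R_w$, you would have to extend $E$ to arbitrary measurable subsets of $G^\times$ (the Hilbert--Schmidt computation does go through there) and prove a twisted analogue of Lemma \ref{lemma53} for the sets $\{(z,k):z\in A_0+kw\}$; as written, the proposal does not supply either ingredient.
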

\begin{proof}
Assume towards a contradiction that there exists a non-zero function $g$ in
$\mathcal{R}( E_A\cap F_N)$. Then $\mathcal{R}(W_\sigma(g))\subseteq\mathcal B_{N_\sigma}$ for
$\sigma\in J$ and $\mathcal{R}(W_{\sigma}(g))=0$ for $\sigma\in \hat{K}\setminus J$. Consider
$A_0=\{z\in A: \exists \text{ a positive measure set } K_z \subseteq K \text{ with }g(z,k)\neq 0,  \forall k\in K_z \}$.
Then $0<m(A_0)<\infty$. Let $g_0(z,k)=\chi_{A_0}(z)g(z,k).$ Thus $g=g_0$ a.e. and hence $g_0\in \mathcal{R}( E_A\cap F_N).$
Choose $s\in \mathbb{N}$ such that $s>2c_J m(A_0)N.$ Now, we construct an increasing
sequence of sets $\{A_l:l=1,\ldots,s\}.$ Using Lemma \ref{lemma53} with $\epsilon=\frac{1}{2c_J N}$,
$B_0=A_0$ and $B=A_{l-1},$ there exists $w_l\in \mathbb{C}^n$ such that
\[m(A_{l-1})<m(A_{l-1}\cup w _lA_0)<m(A_{l-1})+\frac{1}{2c_J N}.\]
Denote $A_l=A_{l-1}\cup w _lA_0$. Then from (\ref{exp74}), we get
\begin{align}\label{exp56}
\dim\mathcal{R}( E_{A_s}\cap F_N) \leq c_J m(A_s)N < \left\{m(A_0)+\frac{s}{2c_J N}\right\}c_J N<s.
\end{align}
On the other hand, we construct $s+1$ linearly independent functions in the space
$\mathcal{R}( E_{A_s}\cap F_N),$ after verifying $\mathcal{R}(F_N)$ is a twisted
translation invariant space.

Let $g_l(z,k)=e^{\frac{i}{2}Im(z.\bar{ w _l})}g_0(z- w _l,k)$. Then for
$\eta^\sigma \in \mathcal{H}_\sigma^2$ and $p>N_\sigma,$ where $\sigma\in J,$ we have
\begin{align*}
\langle W_\sigma(g_l)\eta^\sigma,\psi_p^\sigma\rangle &=\int_{G^\times}
g_l(z,k) \langle \rho_\sigma(z,k)\eta^\sigma,\psi_p^\sigma\rangle dz dk \\
&=\int_{G^\times} e^{\frac{i}{2}Im(z.\bar{w_l})} g_0(z- w _l,k)
\langle \rho_\sigma(z,k)\eta^\sigma,\psi_p^\sigma\rangle dz dk \\
&=\int_{G^\times} e^{\frac{i}{2}Im(z.\bar{w _l})} g_0(z,k)
\langle \rho_\sigma(z+ w_l,k)\eta^\sigma,\psi_p^\sigma\rangle dz dk.
\end{align*}
Since $\rho_\sigma(z,k)\rho_\sigma(k^{-1} w,\mathsf{e})=e^{\frac{i}{2}Im(z.\bar{w})}
\rho_\sigma(z+w,k),$ where $\mathsf{e}$ is the identity element in $K,$ we get
\begin{align*}
\langle W_\sigma(g_l)\eta^\sigma,\psi_p^\sigma\rangle
&=\int_{G^\times} g_0(z,k)\langle \rho_\sigma(z,k)\rho_\sigma(k^{-1}w_l,\mathsf{e})\eta^\sigma,
\psi_p^\sigma\rangle dz dk \\
&=\int_{G^\times} g_0(z,k) \langle \rho_\sigma(z,k)\zeta^\sigma,\psi_p^\sigma\rangle dz dk \\
&=\langle W(g_0)\zeta^\sigma,\psi_p^\sigma\rangle=0.
\end{align*}
Thus, $\mathcal{R}(W_\sigma(g_l))\subseteq\mathcal B_{N_\sigma}$ for $\sigma\in J.$
Similarly, for $\sigma\notin J,$ it can be shown that $\mathcal{R}(W_{\sigma}(g_l))=0.$
Since $A_m=A_0\cup w _1A_0\cup\cdots\cup w_m A_0$ and $g_l=0$ on $(w_lA_0)^c\times K$, we
have $E_{A_m}g_l=g_l$ for $l=0,1,\ldots,m$. Furthermore, $E_{A_m \setminus A_{m-1}}g_l=0$
for $l=0,\ldots,m-1$ and by the definition of $A_0,$ it follows that $E_{A_m \setminus A_{m-1}}g_m\neq 0$
on a set of positive measure. Therefore, it shows that $g_m$ is not a linear combination of
$g_0,\ldots,g_{m-1}$. Hence, $g_0,\ldots,g_s$ are $s+1$ linearly independent functions in
$\mathcal{R}( E_{A_s}\cap F_N)$ that contradicts (\ref{exp56}). This completes the proof.
\end{proof}

This leads to the following version of the Benedicks-Amrein-Berthier theorem for the Weyl transform.
\begin{proposition}\label{prop58}
Let $g\in L^1(G^\times)$ and $\{(z,k)\in G^\times:g(z,k)\neq0\}\subseteq A\times K,$ where $m(A)<\infty.$
Suppose $J$ be a finite subset of $\hat{K}.$ If $W_{\sigma}(g)$ is a finite rank operator for each
$\sigma\in J$ and $W_{\sigma}(g)=0$ for $\sigma\in \hat{K}\setminus J,$ then $g=0.$
\end{proposition}

If $g\in L^1(G^\times),$ by the Plancherel theorem (\ref{exp51}), assumed rank condition implies
$g\in L^2(G^\times).$ Further, for $g\in L^2(G^\times),$ proof of Proposition \ref{prop58} follows
from Proposition \ref{prop52}.

In the Heisenberg motion group, in terms of Fourier transform, the above result takes the following form.
\begin{theorem}\label{th52}
Let $f\in L^1(G)$ and $\{(z,t,k)\in G:g(z,t,k)\neq0\}\subseteq A\times \mathbb{R}\times K,$ where
$m(A)<\infty.$ For each $\lambda\in\mathbb{R}^*,$ consider a finite subset $J_\lambda$ of
$\hat{K}.$ If for each $\lambda\in\mathbb{R}^*,$ $\hat{f}(\lambda,\sigma)$ has finite rank for
$\sigma\in J_\lambda$ and $\hat{f}(\lambda,\sigma)=0$ for $\sigma\in \hat{K}\setminus J_\lambda,$
then $f=0.$
\end{theorem}

\begin{remark}
Notice that, for the Heisenberg motion group, the QUP condition $(\ref{exp79})$ is equivalent to
\begin{equation}\label{exp81}
\int_{\mathbb{R}\setminus\{0\}}\left(\sum_{\sigma\in\hat K}d_\sigma\,\text{rank}\hat f(\lambda,\sigma)\right)
|\lambda|^nd\lambda <\infty.
\end{equation}
Therefore, the rank condition in Theorem \ref{th52} will not satisfy $(\ref{exp81}),$ and hence
Theorem \ref{th52} improves the QUP in that perspective. Further, the assumption in Theorem \ref{th52}
that, for each $\lambda\in\mathbb{R}^*,$ $\hat{f}(\lambda,\sigma)=0$ except finitely many $\sigma,$ looks
natural in view of $(\ref{exp81}).$
\end{remark}

As a consequence of Proposition \ref{prop58}, we obtain the following analogue result in terms
of the Fourier-Wigner decomposition. For this, we recall the Fourier-Wigner decomposition. Let
$g\in L^2(G^\times).$ By Proposition \ref{prop51}, we get $g=\bigoplus\limits_{\sigma\in\hat K}g_\sigma.$

\begin{proposition}\label{prop50}
Let $g\in L^2(G^\times)$ and $\{(z,k)\in G^\times:g_\sigma(z,k)\neq0\}\subseteq A_\sigma \times K,$
where $m(A_\sigma)<\infty,$ whenever $\sigma \in \hat{K}.$ If $W_{\sigma}(g)$ is a finite rank
operator for each $\sigma,$ then $g=0.$
\end{proposition}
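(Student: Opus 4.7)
The plan is to reduce Proposition \ref{prop50} to the one-representation setting of Proposition \ref{prop52} via the orthogonal decomposition $L^2(G^\times) = \bigoplus_{\sigma \in \hat K} V_\sigma$ from Proposition \ref{prop51}. Writing $g = \bigoplus_\sigma g_\sigma$ with $g_\sigma \in V_\sigma$, the support hypothesis gives $\{(z,k) : g_\sigma(z,k) \neq 0\} \subseteq A_\sigma \times K$, and it suffices to show $g_\sigma = 0$ for every $\sigma \in \hat K$.

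The crucial intermediate step is to verify that the Weyl transform is diagonal with respect to this decomposition: $W_{\sigma'}(g_\sigma) = 0$ whenever $\sigma' \neq \sigma$. By linearity and continuity this reduces to the Fourier--Wigner generators $V_\zeta^\eta$, for which a direct calculation (using the factorization $\rho_\sigma(z,k) = \pi_1(z)\mu_1(k) \otimes \sigma(k)$) identifies $W_\sigma(V_\zeta^\eta)$ as a rank-one operator on $\mathcal H_\sigma^2$; the Plancherel identity \eqref{exp51} combined with Lemma \ref{lemma50} then accounts for the whole Plancherel sum in this single term, forcing $W_{\sigma'}(V_\zeta^\eta) = 0$ for all $\sigma' \neq \sigma$. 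Once this diagonalization is in hand, fix $\sigma_o \in \hat K$: the hypothesis says $W_{\sigma_o}(g) = W_{\sigma_o}(g_{\sigma_o})$ has finite rank $N_{\sigma_o}$, so its range lies in some $N_{\sigma_o}$-dimensional subspace $\mathcal B_{\sigma_o}^{N_{\sigma_o}} \subseteq \mathcal H_{\sigma_o}^2$, while $W_\sigma(g_{\sigma_o}) = 0$ for $\sigma \neq \sigma_o$. Letting $F_{N_{\sigma_o}}$ denote the projection built from $\sigma_o$ and $\mathcal B_{\sigma_o}^{N_{\sigma_o}}$, this places $g_{\sigma_o}$ in $\mathcal R(F_{N_{\sigma_o}}) \cap \mathcal R(E_{A_{\sigma_o}}) = \mathcal R(E_{A_{\sigma_o}} \cap F_{N_{\sigma_o}})$; Proposition \ref{prop52} then forces $g_{\sigma_o} = 0$, and since $\sigma_o$ was arbitrary, $g = 0$.

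The main obstacle is precisely the diagonalization claim, because the naive matrix-entry computation produces integrals over $K$ of products $\varphi_{ij}^\sigma \varphi_{pq}^{\sigma'}$ rather than $\varphi_{ij}^\sigma \overline{\varphi_{pq}^{\sigma'}}$, so Theorem \ref{th Peter-Weyl} does not apply verbatim. Packaging the orthogonality through the Plancherel identity \eqref{exp51}, or alternatively applying the inversion formula of Theorem \ref{th50} to $V_\zeta^\eta$ and invoking uniqueness of the expansion in Proposition \ref{prop51}, neatly sidesteps this awkwardness and delivers the diagonalization with no further bookkeeping.
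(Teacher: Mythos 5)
Your architecture is the same as the paper's: decompose $g=\bigoplus_\sigma g_\sigma$ via Proposition \ref{prop51}, show $W_{\sigma'}(g_\sigma)=0$ for $\sigma'\neq\sigma$ so that $W_{\sigma_o}(g)=W_{\sigma_o}(g_{\sigma_o})$, and feed each component into Proposition \ref{prop52}. You have also correctly located the one step the paper passes over in silence: its proof replaces $\int g\,\langle\rho_\sigma\varphi,\psi\rangle$ by $\int g_\sigma\langle\rho_\sigma\varphi,\psi\rangle$ without comment, and, as you note, the pairing $\langle W_{\sigma'}(h)\varphi,\psi\rangle=\int h\,V_\varphi^\psi$ is not the $L^2$ inner product against $V_\varphi^\psi$, so Lemma \ref{lemma50} and Proposition \ref{prop51} do not apply verbatim. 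The difficulty is that the diagonalization you then try to prove is false for the decomposition $\bigoplus_\sigma V_\sigma$, so neither of your proposed repairs can close the gap. What Lemma \ref{lemma50} together with Proposition \ref{prop51} actually gives is that the \emph{conjugates} diagonalize the transform: for $\zeta,\eta\in\mathcal H_\sigma^2$ and $\varphi,\psi\in\mathcal H_{\sigma'}^2$,
\[
\langle W_{\sigma'}(\overline{V_\zeta^\eta})\varphi,\psi\rangle
=\int_{G^\times}\overline{V_\zeta^\eta}\;V_\varphi^\psi\,dz\,dk
=\overline{\int_{G^\times}V_\zeta^\eta\;\overline{V_\varphi^\psi}\,dz\,dk},
\]
which vanishes for $\sigma'\neq\sigma$ and equals $(2\pi)^n\langle\varphi,\zeta\rangle\langle\eta,\psi\rangle$ for $\sigma'=\sigma$; hence $W_\sigma(\overline{V_\zeta^\eta})$ is rank one and $W_{\sigma'}(\overline{V_\zeta^\eta})=0$ otherwise, and $W_\sigma$ is injective on $\overline{V_\sigma}:=\overline{\mathrm{span}}\{\overline{V_\zeta^\eta}\}$. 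If the same held with $V_\zeta^\eta$ in place of $\overline{V_\zeta^\eta}$, this injectivity would force $V_\sigma=\overline{V_\sigma}$ for every $\sigma$, and that fails already for $K=U(1)$: writing $\sigma_p(e^{i\theta})=e^{ip\theta}$ and $\mu_1(e^{i\theta})\phi_\alpha=e^{i(\alpha+r)\theta}\phi_\alpha$, the generator $V_{\phi_0\otimes e}^{\phi_0\otimes e}(z,e^{i\theta})=e^{-|z|^2/4}e^{i(r+p)\theta}$ of $V_{\sigma_p}$ has conjugate carrying the character $e^{-i(r+p)\theta}$, which lies in a different $V_{\sigma_{p'}}$. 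For this element one checks directly that $W_{\sigma_p}$ of it is the \emph{zero} operator while $W_{\sigma_{p'}}$ of it is nonzero for the reflected index $p'=-p-2r$.

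Consequently your first mechanism fails at its first step: $W_\sigma(V_\zeta^\eta)$ is in general not a rank-one operator carrying the full Plancherel mass of $V_\zeta^\eta$ (in the example above it is zero), so the Plancherel identity \eqref{exp51} cannot be used to kill the remaining terms. Your second mechanism fails for a structural reason: the inversion formula of Theorem \ref{th50} expands a function in the pieces $\mathrm{tr}(W_{\sigma'}(\cdot)\rho_{\sigma'}^*)$, and each such piece lies in $\overline{V_{\sigma'}}$, not in $V_{\sigma'}$, so uniqueness of the expansion along $\bigoplus_{\sigma'}V_{\sigma'}$ from Proposition \ref{prop51} says nothing about these coefficients. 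The natural repair is to run the entire argument with the conjugate decomposition $L^2(G^\times)=\bigoplus_\sigma\overline{V_\sigma}$ (also an orthogonal decomposition, since conjugation preserves orthonormal bases), for which the displayed identity yields the diagonalization for free; but then the support hypothesis, which is stated for the $V_\sigma$-components $g_\sigma$, does not transfer automatically to the $\overline{V_\sigma}$-components, and that transfer has to be argued. To be fair, the paper's own proof asserts the very same unjustified interchange, so you have inherited and exposed this defect rather than introduced it; but as written your proposal does not repair it.
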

\begin{proof}
For $\varphi, \psi\in\mathcal H_\sigma^2$ we have
\begin{align*}
\left\langle W_{\sigma}(g)\varphi,\psi\right\rangle &= \int_{K}\int_{\mathbb{C}^n} g(z,k)
\left\langle\rho_\sigma(z,k)\varphi,\psi\right\rangle dzdk \\
&= \int_{K}\int_{\mathbb{C}^n}{g_\sigma}(z,k)\left\langle\rho_\sigma(z,k)\varphi,\psi\right\rangle dzdk \\
&=\left\langle W_{\sigma}(g_{\sigma})\varphi,\psi\right\rangle.
\end{align*}
Hence, for $\sigma_o\in\hat{K},$ $\mathcal{R}(W_{\sigma_o}(g_{\sigma_o}))=\mathcal{R}(W_{\sigma_o}(g))$ be a finite
dimensional subspace of $\mathcal H_{\sigma_o}^2$ and $\mathcal{R}(W_{\sigma}(g_{\sigma_o}))=0$
for $\sigma(\neq \sigma_o)\in \hat{K}$. Thus by Proposition \ref{prop58} we get $g_{\sigma_o}=0.$
Since $\sigma_o \in \hat{K}$ is arbitrary, we infer that $g=0.$
\end{proof}

\begin{remark}
$\bf(a).$ For $U(n)$- bi invariant function, the rank condition in Proposition \ref{prop50} is obviously true.
Thus support condition is enough for the conclusion. In dimension one, it can argue by the fact that
each Fourier-Wigner piece will be of the form $\tilde{\bar{g}}_\alpha=\tilde{\bar{g}}_\alpha\times \Phi_{\alpha,\alpha},$
where $\tilde{\bar{g}}_\alpha(z)=\bar{g}_\alpha(-z),$ which is real analytic. Hence it can not have
finite support.

\smallskip

$\bf(b).$ After a close examination of the utility of $U(n)$ to obtain the
decomposition of $L^2(G^\times)$ as in Proposition \ref{prop51}, we observed that
$U(n)$-invariance is nevermore used except while realizing the irreducible action of
metaplectic repression $\mu_\lambda$ on $P_m^\lambda.$ If we consider a compact subgroup
$K$ of $U(n)$ which makes $(\mathbb H^n\rtimes K, K)$ a Gelfand pair, then $P_m^\lambda$
will be decomposed into finitely many irreducible pieces according to the metaplectic
representation $\mu_\lambda$ of $K.$ To avoid further complexity in the calculation, we have
preferred to prove the results for the Gelfand pair $(\mathbb H^n\rtimes U(n), U(n))$ instead
of $(\mathbb H^n\rtimes K, K).$
\end{remark}

\noindent\textbf{Strong annihilating pair:}
Let $A\subseteq \mathbb{R}$ and $\Sigma\subseteq \hat{\mathbb{R}}$ be measurable sets.
Then the pair $(A,\Sigma)$ is called {\em weak annihilating pair} if $\text{supp}\,f\subseteq A$
and $\text{supp}\,\hat{f}\subseteq \Sigma,$ implies $f=0.$ The pair $(A,\Sigma)$ is called
{\em strong annihilating pair} if there exists a positive number $C=C(A,\Sigma)$ such that
\begin{equation}\label{exp29}
\|f\|_2^2\leq C \left(\|f\|_{L^2(A^c)}^2+\|\hat{f}\|_{L^2(\Sigma^c)}^2 \right)
\end{equation}
for every $f \in L^2(\mathbb{R}).$ It is obvious that every strong annihilating
pair is a weak annihilating pair. In \cite{B}, Benedicks had proved that $(A,\Sigma)$ is a weak
annihilating pair when $A$ and $\Sigma$ both have finite measure. In \cite{AB}, Amrein-Berthier
had proved that $(A,\Sigma)$ is a strong annihilating under the identical assumption as in \cite{B}.

\smallskip

Since, Fourier transform on the Heisenberg motion group is an operator valued function, we
could not expect a similar conclusion as (\ref{exp29}). However, we can adequately describe
a strong annihilating pair.

\begin{definition}
For each $\sigma\in \hat{K},$ let $A_\sigma$ be a measurable subset of $\mathbb C^n$
and $S_\sigma$ be a closed subspace of $\mathcal H_\sigma^2.$ By abuse of notations,
denote $A=(A_\sigma)_{\sigma\in \hat{K}}$ and $S=(S_\sigma)_{\sigma\in\hat{K}}.$ We
say that the pair $(A,S)$ is a {\em strong annihilating pair} for the Weyl transform,
if there exist positive numbers $C_\sigma=C_\sigma(A_\sigma,S_\sigma)$ such that for
every $g \in L^2(\mathbb C^n \times K),$
\begin{equation}\label{exp60}
\|g\|_2^2\leq \sum_{\sigma \in \hat{K}} C_\sigma\left(\|g_\sigma \|_{L^2(A_\sigma^c\times K)}^2
+\|P_{S_{\sigma}}^{\perp} W_\sigma^\lambda(g) \|_{HS}^2 \right),
\end{equation}
where $g_\sigma$'s are the orthogonal pieces of $g,$ according to Proposition \ref{prop51},
and $P_{S_{\sigma}}$ is the projection of $\mathcal H_\sigma^2$ onto $S_\sigma.$
\end{definition}

We prove that if $A_\sigma$ has finite measure and dimension of $S_\sigma$ is finite
for each $\sigma\in\hat{K},$ then $(A,S)$ is a strong annihilating pair. For this,
we need the following basic result.
\begin{lemma}\cite{HJ}\label{lemma7}
Let $P$ and $Q$ be two orthogonal projections on a complex Hilbert space $H.$
Then $\|PQ\|<1$ if and only if there exists a positive number $C$ such that for each $x\in H$
\[\|x\|^2\leq C\left(\|P^{\perp}x\|^2+\|Q^{\perp}x\|^2\right).\]
\end{lemma}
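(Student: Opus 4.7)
The plan is to prove the two implications separately. For the forward direction, I would start from $\alpha := \|PQ\|<1$ and, for any $x \in H$, decompose $Px = PQx + PQ^{\perp}x$. Applying the weighted triangle inequality $\|a+b\|^2 \leq (1+\epsilon)\|a\|^2 + (1+\epsilon^{-1})\|b\|^2$ for a parameter $\epsilon>0$ yields
\[
\|Px\|^2 \leq (1+\epsilon)\|PQx\|^2 + (1+\epsilon^{-1})\|PQ^{\perp}x\|^2 \leq (1+\epsilon)\alpha^2\|x\|^2 + (1+\epsilon^{-1})\|Q^{\perp}x\|^2.
\]
Combining this with the Pythagorean identity $\|x\|^2 = \|P^{\perp}x\|^2 + \|Px\|^2$, I obtain
\[
\bigl(1 - (1+\epsilon)\alpha^2\bigr)\|x\|^2 \leq \|P^{\perp}x\|^2 + (1+\epsilon^{-1})\|Q^{\perp}x\|^2.
\]
Since $\alpha < 1$, I can choose $\epsilon>0$ small enough that $(1+\epsilon)\alpha^2 < 1$; dividing through gives the required bound with the explicit constant $C = (1+\epsilon^{-1})/\bigl(1-(1+\epsilon)\alpha^2\bigr)$.

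For the reverse direction, I would argue by contrapositive. Assume $\|PQ\| = 1$. By definition of the operator norm there exist unit vectors $x_n \in H$ with $\|PQx_n\| \to 1$. Set $y_n = Qx_n/\|Qx_n\|$ (well-defined for large $n$ since $\|Qx_n\| \geq \|PQx_n\|\to 1$). Then $y_n \in \mathcal{R}(Q)$, so $Q^{\perp}y_n = 0$, and $\|Py_n\| \to 1$, which forces $\|P^{\perp}y_n\|^2 = 1 - \|Py_n\|^2 \to 0$. If the asserted inequality held with some constant $C$, applying it to $y_n$ would yield
\[
1 = \|y_n\|^2 \leq C\bigl(\|P^{\perp}y_n\|^2 + \|Q^{\perp}y_n\|^2\bigr) = C\|P^{\perp}y_n\|^2 \longrightarrow 0,
\]
a contradiction; hence no such $C$ can exist.

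There is essentially no serious obstacle here since this is a classical Hilbert-space fact. The only point requiring care is in the forward direction: one must use the weighted inequality $\|a+b\|^2 \leq (1+\epsilon)\|a\|^2 + (1+\epsilon^{-1})\|b\|^2$ with $\epsilon$ tunable, since the cruder bound $\|a+b\|^2 \leq 2\|a\|^2 + 2\|b\|^2$ would only close the estimate for $\alpha < 1/\sqrt{2}$ and would fail to cover the full range $\alpha \in (0,1)$.
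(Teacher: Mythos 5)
Your proof is correct. Note that the paper itself offers no proof of this lemma --- it is simply cited from Havin--J\"oricke \cite{HJ} --- so there is nothing to compare against; what you have written is a complete, self-contained argument that the paper omits. Both directions check out: the forward direction via the decomposition $Px = PQx + PQ^{\perp}x$, the weighted inequality $\|a+b\|^2 \le (1+\epsilon)\|a\|^2 + (1+\epsilon^{-1})\|b\|^2$, and the Pythagorean identity $\|x\|^2 = \|Px\|^2 + \|P^{\perp}x\|^2$ gives the stated constant $C = (1+\epsilon^{-1})/\bigl(1-(1+\epsilon)\alpha^2\bigr)$ after choosing $\epsilon$ with $(1+\epsilon)\alpha^2<1$; and the contrapositive of the reverse direction correctly uses the normalized vectors $y_n = Qx_n/\|Qx_n\|$, for which $Q^{\perp}y_n=0$ and $\|P^{\perp}y_n\|\to 0$, to defeat any candidate constant $C$. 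Your closing remark about why the tunable $\epsilon$ is needed (the crude factor-of-$2$ bound only covers $\alpha<1/\sqrt{2}$) is exactly the right point of care.
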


For $\sigma_o\in\hat{K},$ let $S_{\sigma_o}$ be a finite dimensional subspace of $\mathcal H_{\sigma_o}^2$
and $A_{\sigma_o}$ be any subset of $\mathbb{C}^n$ with finite measure. Then, recall the set of projections
for $J=\{\sigma_0\},$ $E_{A_{\sigma_o}}g(z,k)=\chi_{A_{\sigma_o}}(z)g(z,k)$ and
$W_{\sigma_o}^\lambda (F_{S_{\sigma_o}}g)=P_{S_{\sigma_o}}W_{\sigma_o}^\lambda(g),$ $W_\sigma^\lambda (F_{S_{\sigma_o}}g)=0$
for $\sigma\neq \sigma_o.$ Now, by Lemma \ref{lemma52} $E_{A_{\sigma_o}}F_{S_{\sigma_o}}$ is a compact
operator and from Proposition \ref{prop52}, we have $E_{A_{\sigma_o}}\cap F_{S_{\sigma_o}}=0.$ Therefore,
we must have $\|E_{A_{\sigma_o}}F_{S_{\sigma_o}}\| <1.$ Since
$W_{\sigma_o}^\lambda (F_{S_{\sigma_o}}^\perp g)=P_{S_{\sigma_o}}^{\perp}W_{\sigma_o}^\lambda(g)$
and $W_\sigma^\lambda (F_{S_{\sigma_o}}^\perp g)=W_\sigma^\lambda(g)$ for $\sigma\neq \sigma_o,$ by
Lemma \ref{lemma7}, there exists $\tilde{C}_{\sigma_o}=\tilde{C}_{\sigma_o}(A_{\sigma_o},S_{\sigma_o})>0$ such that
\[\|g\|_2^2\leq \tilde{C}_{\sigma_o}\left(\|g\|_{L^2(A_{\sigma_o}^c\times K)}^2
+d_{\sigma_o}\|P_{S_{\sigma_o}}^{\perp}W_{\sigma_o}^\lambda(g)\|_{HS}^2
+\sum_{\sigma\neq\sigma_o}d_\sigma\|W_\sigma^\lambda(g)\|_{HS}^2\right),\]
for all $g \in L^2(\mathbb C^n \times K).$ In particular, for any $g_{\sigma_o}\in V_{\sigma_o}^\lambda$ we have
\begin{equation}\label{exp59}
\|g_{\sigma_o}\|_2^2\leq C_{\sigma_o}\left(\|g_{\sigma_o}\|_{L^2(A_{\sigma_o}^c\times K)}^2
+\|P_{S_{\sigma_o}}^{\perp}W_{\sigma_o}^\lambda(g_{\sigma_o})\|_{HS}^2 \right).
\end{equation}
For any $g\in L^2(\mathbb C^n \times K),$ by Proposition \ref{prop51}, $g=\bigoplus\limits_{\sigma\in\hat K}g_\sigma.$
Since $\sigma_o$ is arbitrary, from (\ref{exp59}) we can conclude that $(A,S)$ is a strong annihilating pair, whenever
$A_\sigma$ has finite measure and dimension of $S_\sigma$ is finite for each $\sigma\in\hat{K}.$

\begin{remark}
Consider the hypothesis of Proposition \ref{prop58}. There exist two large classes of functions
of which one satisfies the support condition, and the other satisfies the rank condition.
However, in Proposition \ref{prop50}, it is not clear which functions will fulfill such a
support condition. In other words, whether the assumption of finite support condition in each
piece is strong enough for the conclusion of Proposition \ref{prop50}. We know this is true for
the $U(n)$-bi-invariant functions. However, we reached out to a quantitative estimate
$(\ref{exp60},\,\ref{exp59})$ of Proposition \ref{prop50}, which is true for all square
integrable functions, irrespective of their support.
\end{remark}

\section{Quaternion Heisenberg group}
In this section, we prove an analogue of Benedicks-Amrein-Berthier theorem for
the quaternion Heisenberg group. As the multiplication of two quaternions is not
commutative, the generalization of the above result in this setup is notable. We
shall start by describing notations and preliminary facts about the quaternion
Heisenberg group. For more details, see \cite{CM,CZ}.

\smallskip

The quaternion Heisenberg group is step two nilpotent
Lie group with centre $\mathbb R^3.$ Let $\mathbb{Q}$ be the set of all quaternions. For $q=q_0+iq_1+jq_2+kq_3 \in \mathbb{Q}$,
the conjugate of $q$ is defined by $\bar{q}=q_0-iq_1-jq_2-kq_3.$ The inner product in $\mathbb{Q}$
is defined by $\langle q,\tilde{q}\rangle = \text{Re}(\bar{q}\tilde{q}).$ This leads to
$|q|^2=\langle q,q\rangle=\sum\limits_{l=0}^3 q_l^2,$ and we get the
relations $\overline{q\tilde{q}}=\bar{\tilde{q}}\bar{q}$ and $|q\tilde{q}|=|q\|\tilde{q}|.$
The set $\mathcal{Q}=\mathbb{Q}\times \mathbb{R}^3=\{(q,t):q\in \mathbb{Q}, t\in \mathbb{R}^3\}$
becomes a non-commutative group when equipped with the group law
\[(q,t)(\tilde{q},\tilde{t})=(q+\tilde{q},t+\tilde{t}-2~\text{Im}(\bar{\tilde{q}}q)).\]
It is easy to see that the Lebesgue measure $dq dt$ on $\mathbb{Q}\times \mathbb{R}^3$
is the Haar measure on $\mathcal{Q}.$ For $1\leq p \leq \infty,$ $L^p(\mathcal{Q})$
denotes the usual $L^p$ space of all quaternion-valued functions on $\mathbb{Q}\times \mathbb{R}^3.$

\smallskip

Let $a\in \text{Im }\mathbb{Q}\smallsetminus \{0\}.$ Then $J_a:q\mapsto q\cdot\frac{a}{|a|}$
defines a complex structure on $\mathbb{Q}.$ Let $\mathcal{F}_a$ be the Fock space of all holomorphic
functions $F$ with quaternion values on $(\mathbb{Q},J_a)$ such that
\[\| F \|_2^2=\int_{\mathbb{Q}}|F(q)|^2 e^{-2|a\|q|^2}dq <\infty. \]
An irreducible unitary representation $\pi_a$ of $\mathcal{Q}$ realized on $\mathcal{F}_a$
is given by \[\pi_a(q,t) F(\tilde{q})=e^{i\langle a,t\rangle-|a|\left(|q|^2+2\langle \tilde{q},q\rangle
-2i\langle \tilde{q}.\frac{a}{|a|},q\rangle\right)}F(\tilde{q}+q),\]
where $F\in \mathcal{F}_a.$ Up to unitary equivalence, $\pi_a$'s are  all the infinite
dimensional irreducible unitary representations of $\mathcal{Q}.$ For $f\in L^1(\mathcal{Q}),$
the group Fourier transform can be expressed as
\[\hat{f}(a)=\int_{\mathcal{Q}}f(q,t)\pi_a(q,t)dqdt.\]
For $f\in L^2(\mathcal{Q}),$ the following Plancherel formula holds.

\begin{equation}\label{exp80}
\| f\|_2^2=\frac{1}{2\pi^5}\int_{\text{Im } \mathbb{Q}\setminus \{0\}}\|\hat{f}(a)\|_{\text{HS}}^2|a|^2da.
\end{equation}

Let \[f^a(q)=\int_{\mathbb{R}^3}f(q,t)e^{i\langle a,t\rangle} dt,\] the inverse Fourier transform of $f$ in the $t$ variable.
If we denote $\pi_a(q)=\pi_a(q,0),$ then the Weyl transform of $g\in L^1(\mathbb{Q})$ can be define by
\begin{align*}
W_a(g)=\int_{\mathbb{Q}}g(q)\pi_a(q)dq.
\end{align*}
Hence it follows that $\hat{f}(a)=W_a(f^a).$ Further, $W_a(g)$ is a bounded operator if $g\in L^1(\mathbb{Q})$
and a Hilbert-Schmidt operator when $g\in L^2(\mathbb{Q}).$ In addition, when $g\in L^2(\mathbb{Q}),$ the
Plancherel formula for the Weyl transform $W_a$ is given by
\begin{align}\label{exp70}
\| W_a(g)\|_{\text{HS}}^2=\frac{\pi^2}{4|a|^2}\|g\|^2.
\end{align}
Finally, the inversion formula for the Weyl transform is given by
\begin{align}\label{exp71}
g(q)=\frac{4|a|^2}{\pi^2}\text{tr}\left(W_a(g)\pi_a^*(q)\right).
\end{align}

\smallskip

In order to prove that $\pi_a$ is a square integrable representation, we need to recall
the following Schur's orthogonality relation, see \cite{MW}.

\begin{proposition}\label{prop72}\cite{MW}
Suppose $G$ be a connected simply connected nilpotent Lie group with centre $Z.$ Let $\pi$ be
an irreducible unitary representation of $G$ realised on a complex Hilbert space $H$ and
$\pi|_Z=\chi I_H,$ where $\chi$ is a character of $Z.$ Then $\langle\pi(x)h,h\rangle\in L^2(G/Z)$
for some non-zero $h\in H$ if and only if
\begin{align*}
\int_{G/Z}\langle\pi(x)h_1,k_1\rangle\overline{\langle\pi(x)h_2,k_2\rangle}d\nu_{G/Z}
=c_\pi\langle h_1,h_2\rangle\overline{\langle k_1,k_2\rangle}
\end{align*}
for all $h_l,k_l\in H,l=1,2,$ where $c_\pi$ is a constant depending only on $\pi.$
\end{proposition}

\begin{lemma}\label{lemma70}
Let $\varphi,\psi \in \mathcal{F}_a.$ Then,
\begin{align}\label{exp72}
\int_{\mathbb{Q}}|\langle \pi_a(q)\varphi,\psi\rangle|^2 dq \leq 4c_a\|\varphi\|_2^2\|\psi\|_2^2,
\end{align}
where $c_a$ is some constant.
\end{lemma}

\begin{proof}
For $q=q_0+iq_1+jq_2+kq_3 \in \mathbb{Q},$ write $z_1=q_0+iq_1$ and $z_2=q_3+iq_4.$
Then $\varphi(q)=\frac{4|a|^2}{\pi}z_1z_2\in \mathcal{F}_a$ and $\|\varphi\|_2=1.$
Further,
\begin{align*}
\int_{\mathbb{Q}}|\langle \pi_a(q)\varphi,\varphi \rangle|^2 dq&=\int_{\mathbb{Q}}\Big|\int_{\mathbb{Q}}
\pi_a(q)\varphi(\tilde{q})\overline{\varphi(\tilde{q})}e^{-2|a||\tilde{q}|^2}d\tilde{q}\Big|^2 dq \\
&=\int_{\mathbb{Q}}\Big|\int_{\mathbb{Q}}e^{-|a|(|q|^2+2\langle \tilde{q},q\rangle-2i\langle \tilde{q}
\frac{a}{|a|},q\rangle}\varphi(\tilde{q}+q)\overline{\varphi(\tilde{q})}e^{-2|a||\tilde{q}|^2}d\tilde{q}\Big|^2 dq.
\end{align*}
By Minkowski's integral inequality, it follows that
\begin{align*}
\left(\int_{\mathbb{Q}}|\langle \pi_a(q)\varphi,\varphi\rangle|^2 dq\right)^\frac{1}{2}
&\leq\int_{\mathbb{Q}}\left(\int_{\mathbb{Q}}|\varphi(\tilde{q}+q)|^2e^{-2|a||\tilde{q}+q|^2}
 |\varphi(\tilde{q})|^2e^{-2|a||\tilde{q}|^2}dq\right)^\frac{1}{2}d\tilde{q} \\
&=\|\varphi\|_2\int_{\mathbb{Q}}|\varphi(\tilde{q})|e^{-|a||\tilde{q}|^2}d\tilde{q} \\
&=\frac{4|a|^2}{\pi}\int_{\mathbb{C}^2}|z_1\|z_2|e^{-|a|(|z_1|^2+|z_2|^2)}dz_1 dz_2 <\infty.
\end{align*}
Hence by Proposition \ref{prop72}, for complex valued functions $\varphi_l,\psi_l \in \mathcal{F}_a,$
where $l=1,2,$ we get
\begin{align}\label{exp76}
\int_{\mathbb{Q}}\langle \pi_a(q)\varphi_1,\psi_1\rangle\overline{\langle \pi_a(q)\varphi_2,\psi_2}\rangle dq
=c_a\langle \varphi_1,\varphi_2 \rangle \overline{\langle \psi_1,\psi_2 \rangle}.
\end{align}
For arbitrary $\varphi,\psi \in \mathcal{F}_a,$ we can write $\varphi=\varphi_1+\varphi_2 j$
and $\psi=\psi_1+\psi_2 j,$ where $\varphi_l,\psi_l$ are complex valued functions. Then by (\ref{exp76}),
\[\int_{\mathbb{Q}}|\langle \pi_a(q)\varphi,\psi\rangle|^2 dq
\leq 4c_a \sum_{l_1,l_2=1}^2\|\varphi_{l_1}\|_2^2\|\psi_{l_2}\|_2^2=4c_a\|\varphi\|_2^2\|\psi\|_2^2,\]
where the last equality true as $|\varphi(\tilde{q})|^2=|\varphi_1(\tilde{q})|^2+|\varphi_2(\tilde{q})|^2.$
\end{proof}

Let $g\in L^2(\mathbb{Q})$ and $W_a(g)$ be a finite rank operator. Then there
exists an orthonormal basis, say, $\{e_1,e_2,\ldots\}$ of $\mathcal{F}_a$ such that
$\mathcal{R}(W_a(g))=\mathcal{B}_N,$ where $\mathcal{B}_N=\text{span}\{e_1,\ldots,e_N \}.$
Define an orthogonal projection $P_N$ of $\mathcal{F}_a$ onto $\mathcal{B}_N.$
Let $A$ be a measurable subset of $\mathbb{Q}$. Define a pair of orthogonal
projections $E_A$ and $F_N$ of $L^2(\mathbb{Q})$ by
\begin{align}\label{exp77}
E_A g=\chi_A g \qquad \text{ and } \qquad W_a(F_N g)=P_N W_a(g),
\end{align}
where $\chi_A$ denotes the characteristic function of $A.$
Then $\mathcal{R}(E_A)=\{g\in L^2(\mathbb{Q}): g= \chi_A g \}$ and
$\mathcal{R}(F_N)=\{g\in L^2(\mathbb{Q}): \mathcal{R}(W_a(g))\subseteq\mathcal{B}_N\}.$

\smallskip

Next, we prove that $E_AF_N$ is a Hilbert-Schmidt operator. Throughout this section,
we shall assume that $A$ is a measurable subset of $\mathbb{Q}$ with finite measure.

\begin{lemma}\label{lemma71}
$E_AF_N$ is an integral operator on $L^2(\mathbb{Q}).$
\end{lemma}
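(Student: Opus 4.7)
The plan is to mirror the argument of Lemma \ref{lemma51} using the inversion formula (\ref{exp71}) for the Weyl transform on $\mathcal{Q}$ in place of Theorem \ref{th50}. Since $W_a(F_N g)=P_N W_a(g)$ is a finite rank operator (hence certainly trace class), the inversion formula applies and gives a pointwise representation of $F_N g$ in terms of $g$.

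Concretely, I would first take $g\in L^2(\mathbb{Q})$ and write
\[
(F_N g)(q)=\tfrac{4|a|^2}{\pi^2}\,\text{tr}\!\left(\pi_a^*(q)W_a(F_N g)\right)=\tfrac{4|a|^2}{\pi^2}\,\text{tr}\!\left(\pi_a^*(q)P_N W_a(g)\right).
\]
Next I would unfold $W_a(g)=\int_{\mathbb{Q}} g(\tilde q)\pi_a(\tilde q)\,d\tilde q$ inside the trace. Using the cyclicity of trace together with the fact that $P_N$ is finite rank (so one may pull the $\tilde q$-integral past the trace by expanding the trace as $\sum_{l=1}^N\langle \,\cdot\, e_l,e_l\rangle$ and applying Fubini to each of the finitely many terms), I obtain
\[
(F_N g)(q)=\tfrac{4|a|^2}{\pi^2}\int_{\mathbb{Q}} g(\tilde q)\,\text{tr}\!\left(P_N\pi_a(\tilde q)\pi_a^*(q)\right)d\tilde q.
\]
Multiplying by the characteristic function $\chi_A(q)$ yields
\[
(E_A F_N g)(q)=\int_{\mathbb{Q}} g(\tilde q)\,\mathcal{K}(q,\tilde q)\,d\tilde q,
\]
with kernel
\[
\mathcal{K}(q,\tilde q)=\tfrac{4|a|^2}{\pi^2}\,\chi_A(q)\,\text{tr}\!\left(P_N\pi_a(\tilde q)\pi_a^*(q)\right).
\]

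The only delicate step is the Fubini interchange that moves the $\tilde q$-integral inside the trace. I expect this to be the main (though mild) obstacle: justifying it rigorously requires writing $\text{tr}(P_N\pi_a(\tilde q)\pi_a^*(q))=\sum_{l=1}^{N}\langle \pi_a(\tilde q)\pi_a^*(q)e_l,e_l\rangle$, after which the sum is finite and each summand is a matrix coefficient of the unitary representation $\pi_a$. Then Fubini is legitimate by the Cauchy--Schwarz estimate $|\langle \pi_a(\tilde q)\pi_a^*(q)e_l,e_l\rangle|\le 1$ combined with $g\in L^2$ localized through the characteristic function (or first on a dense subset $L^1\cap L^2(\mathbb{Q})$, then extended by continuity). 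Having identified $\mathcal{K}$ explicitly, the lemma follows; this representation will moreover be the starting point for the subsequent Hilbert--Schmidt norm computation $\|E_A F_N\|_{HS}^2=c'\,m(A)\,N$, exactly parallel to Lemma \ref{lemma52}, via Lemma \ref{lemma70}.
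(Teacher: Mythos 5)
Your proposal is correct and follows essentially the same route as the paper: apply the inversion formula (\ref{exp71}) to $F_Ng$, use cyclicity of the trace with the finite-rank projection $P_N$ to pull the $\tilde q$-integral outside, and read off the kernel $\tilde c_a\,\chi_A(q)\,\mathrm{tr}\left(P_N\pi_a(\tilde q)\pi_a^*(q)\right)$ (the paper writes $\pi_a(-q)$ for $\pi_a^*(q)$, which is the same operator). Your extra care about the Fubini interchange is a welcome addition the paper omits, and your constant bookkeeping is in fact cleaner than the paper's, which inadvertently carries $\tilde c_a$ twice.
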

\begin{proof}
For $g\in L^2(\mathbb{Q})$, we have $W_a(F_N g)=P_N W_a(g)$. By inversion formula for the Weyl transform
\begin{align*}
(F_Ng)(q)&=\tilde{c}_a\text{tr}(W_a(F_Ng)\pi_a(q)^{*})=\tilde{c}_a\text{tr}(P_NW_a(g)\pi_a(-q)) \\
&=\tilde{c}_a\int_{\mathbb{Q}} g(\tilde{q})\text{tr}\left(P_N \pi_a(\tilde{q})\pi_a(-q)\right)d\tilde{q}.
\end{align*}
Hence, it follows that
\begin{align*}
(E_AF_Ng)(q)&=\chi_A(q)(F_Ng)(q) =\tilde{c}_a\chi_A(q)\int_{\mathbb{Q}} g(\tilde{q})\text{tr}
\left(P_N \pi_a(\tilde{q})\pi_a(-q)\right)d\tilde{q} \\
&=\tilde{c}_a\int_{\mathbb{Q}} g( w )K(q,\tilde{q})d\tilde{q},
\end{align*}
where $K(q,\tilde{q})= \tilde{c}_a~\chi_A(q)\text{tr}\left(P_N \pi_a(\tilde{q}) \pi_a(-q) \right)$ and
$\pi^2\tilde{c}_a=4|a|^2$. Thus, we infer that $E_AF_N$ is an integral operator with kernel $K.$
\end{proof}

\begin{lemma}\label{lemma72}
$E_AF_N$ is a Hilbert-Schmidt operator and $\|E_AF_N\|_{HS}^2\leq c'm(A)N^2$, for some constant $c',$
independent of the choice of $A$ and $N$.
\end{lemma}
\begin{proof}
From Lemma \ref{lemma71}, it follows that
\begin{align*}\label{exp25}
\|E_AF_N\|_{HS}^2&=\tilde{c}_a^2 \int_{\mathbb{Q}}\int_{\mathbb{Q}}|K(q,\tilde{q})|^2 d\tilde{q} dq   \nonumber \\
&=\tilde{c}_a^2 \int_{\mathbb{Q}}|\chi_A(q)|^2 \left(\int_{\mathbb{Q}}
|\text{tr}\,\left(P_N \pi_a(\tilde{q})\pi_a(-q)\right)|^2 d\tilde{q}\right)dq  \nonumber \\
&=\tilde{c}_a^2 \int_{\mathbb{Q}}\chi_A(q) \left(\int_{\mathbb{Q}}\Big|
\sum\limits_{l=1}^N \langle\pi_a(\tilde{q})\pi(-q)e_l,e_l\rangle \Big|^2 d\tilde{q}\right)dq   \nonumber  \\
&=\tilde{c}_a^2 \int_{\mathbb{Q}}\chi_A(q) \left(\int_{\mathbb{Q}}\Big|\sum\limits_{l=1}^N
e^{-2i\langle \tilde{q}a,q\rangle}\langle\pi_a(\tilde{q}-q)e_l,e_l\rangle \Big|^2 d\tilde{q}\right)dq.
\end{align*}
Since $\pi_a(\tilde{q})\pi_a(q)=e^{2i\langle \tilde{q}a,q\rangle}\pi_a(\tilde{q}+q)),$ we get
\begin{align*}
\|E_AF_N\|_{HS}^2\leq \tilde{c}_a^2 \int_{\mathbb{Q}}\chi_A(q) \left(N\int_{\mathbb{Q}}\sum\limits_{l=1}^N
\Big|\langle\pi_a(\tilde{q})e_l,e_l\rangle \Big|^2 d\tilde{q}\right)dq.
\end{align*}
Hence, from the square integrable property (\ref{exp72}) of the representation, it follows that
\begin{align*}
\|E_AF_N\|_{HS}^2\leq 4c_a\tilde{c}_a^2 m(A)N^2=c'm(A)N^2<\infty.
\end{align*}
\end{proof}
Let $S$ be a closed subspace of $\mathcal{F}_a.$
Define $F_S$ by $W_a(F_Sg)=P_SW_a(g),$ where $P_S$ is the orthogonal projection
of $\mathcal{F}_a$ onto $S$ and $g\in L^2(\mathbb{Q}).$ In particular,
if $S=\mathcal{B}_N,$ then $F_S=F_N.$

\begin{proposition}\label{prop74}
Let $A \subseteq \mathbb{Q}$ with finite measure, and $S$ be a closed subspace of $\mathcal{F}_a.$
Then, either $E_A\cap F_S=0$ or for each $\epsilon'>0$ there exists $\tilde{A}\supset A$ with
$m(\tilde{A}\smallsetminus A)<\epsilon'$ such that $\dim\mathcal{R}(E_{\tilde{A}}\cap F_S)=\infty.$
\end{proposition}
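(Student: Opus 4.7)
The strategy parallels Proposition \ref{prop52}, but since $S$ is only assumed closed (not finite dimensional), the Hilbert--Schmidt bound of Lemma \ref{lemma72} no longer yields a contradiction via a finite dimension count. Instead, the plan is to \emph{construct} infinitely many linearly independent vectors in $\mathcal{R}(E_{\tilde A}\cap F_S)$ for some $\tilde A\supset A$ with $m(\tilde A\setminus A)<\epsilon'$, once the intersection $E_A\cap F_S$ is assumed nontrivial.

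Assume $E_A\cap F_S\neq 0$ and pick $0\neq g_0\in\mathcal{R}(E_A\cap F_S)$. Then $g_0=\chi_A g_0$ and $\mathcal{R}(W_a(g_0))\subseteq S$. Let $E=\{q\in A:g_0(q)\neq 0\}$, so that $0<m(E)\leq m(A)<\infty$. Since Lemma \ref{lemma53} applies verbatim to $\mathbb Q\cong\mathbb R^4$, I would set $A_0=A$ and inductively choose $w_l\in\mathbb Q$ together with $A_l=A_{l-1}\cup w_l E$ satisfying
\[m(A_{l-1})<m(A_l)<m(A_{l-1})+\frac{\epsilon'}{2^{l+1}},\qquad l\geq 1.\]
Setting $\tilde A=\bigcup_{l\geq 0}A_l$, telescoping gives $\tilde A\supseteq A$ and $m(\tilde A\setminus A)\leq \epsilon'/2<\epsilon'$.

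Next, define the twisted translates $g_l(q)=e^{2i\langle qa,w_l\rangle}g_0(q-w_l)$ for $l\geq 1$. By construction $\text{supp}\,g_l\subseteq w_l E\subseteq A_l\subseteq \tilde A$, so $g_l\in\mathcal{R}(E_{\tilde A})$. Using the Weyl multiplication rule $\pi_a(\tilde q)\pi_a(q)=e^{2i\langle\tilde q a,q\rangle}\pi_a(\tilde q+q)$ together with the substitution $q\mapsto q+w_l$ in the integral defining $W_a(g_l)$, a direct computation yields
\[W_a(g_l)=e^{2i\langle w_la,w_l\rangle}\,W_a(g_0)\,\pi_a(w_l).\]
Since $\pi_a(w_l)$ is unitary, $\mathcal{R}(W_a(g_l))=\mathcal{R}(W_a(g_0))\subseteq S$, whence $g_l\in\mathcal{R}(F_S)$ and therefore $g_l\in\mathcal{R}(E_{\tilde A}\cap F_S)$ for every $l\geq 0$. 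Linear independence of $\{g_l\}_{l\geq 0}$ follows from the ``growing support'' argument of Proposition \ref{prop52}: for each $m\geq 1$, the set $A_m\setminus A_{m-1}\subseteq w_m E$ has positive measure by the strict inequality above and $g_m$ is nonzero on a set of positive measure there, while $g_l$ for $l<m$ (including $l=0$, with $g_0$ supported in $E\subseteq A\subseteq A_{m-1}$) is supported in $w_l E\subseteq A_l\subseteq A_{m-1}$ and so vanishes on $A_m\setminus A_{m-1}$. This forces $\dim\mathcal{R}(E_{\tilde A}\cap F_S)=\infty$.

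The main technical point is the commutation computation producing the factorization $W_a(g_l)=e^{2i\langle w_la,w_l\rangle}W_a(g_0)\pi_a(w_l)$; the precise phase $e^{2i\langle qa,w_l\rangle}$ in the definition of $g_l$ is dictated by the non-commutative multiplication rule for $\pi_a$, and any other twist would introduce an extra non-unitary factor that could shrink the range below $S$. Once this identity is in place, the remaining ingredients---the measure-theoretic iteration built on Lemma \ref{lemma53}, the support tracking, and the linear-independence bookkeeping---are a direct transplantation of the Heisenberg motion group strategy of Proposition \ref{prop52}.
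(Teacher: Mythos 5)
Your proposal is correct and follows essentially the same route as the paper: assume $E_A\cap F_S\neq 0$, enlarge $A$ iteratively via Lemma \ref{lemma53} with geometrically decaying $\epsilon$'s, form the twisted translates $g_l(q)=e^{2i\langle qa,w_l\rangle}g_0(q-w_l)$, and use the growing-support argument for linear independence. The only cosmetic difference is that you establish $\mathcal{R}(W_a(g_l))\subseteq S$ via the operator identity $W_a(g_l)=W_a(g_0)\pi_a(w_l)$ (the phase $e^{2i\langle w_la,w_l\rangle}$ is in fact $1$ since $a$ is purely imaginary), whereas the paper checks the vanishing of matrix coefficients against basis vectors orthogonal to $S$ — these are the same computation.
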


\begin{proof}
If $E_A\cap F_S\neq 0,$ then there exists a non-zero function $g_0\in\mathcal{R}( E_A\cap F_S)$.
Let $A_0=\{x\in A: g_0(x)\neq 0\}$ and $\tilde{A}_1=A.$ By Lemma \ref{lemma53}, for
$\epsilon=\frac{\epsilon'}{2^l}$, $B_0=A_0$ and $B=\tilde{A}_l,$ there exists $\tilde{q}_l\in \mathbb{Q}$
such that
\begin{equation}\label{exp78}
m(\tilde{A}_l)<m(\tilde{A}_l\cup \tilde{q}_lA_0)<m(\tilde{A}_l)+\frac{\epsilon'}{2^l}
\end{equation}
where $l\in \mathbb{N}.$
Put $\tilde{A}_{l+1}=\tilde{A}_l\cup \tilde{q}_lA_0$ and $\tilde A=\bigcup\limits_{l=1}^\infty\tilde{A}_l.$
Then $\tilde A_l$ is a non-decreasing sequence, and hence from (\ref{exp78}) it follows that
$m(\tilde{A}\smallsetminus A)<\epsilon'.$ For $l\in \mathbb{N},$ define
$g_l(q)=g_0(q-\tilde{q}_l)e^{2i\langle qa,\tilde{q}_l\rangle}.$  We show
that $g_l\in \mathcal{R}( E_{\tilde A}\cap F_S)$ for each $l$ and they are
linearly independent. Let $\mathcal{B}_S$ be an orthonormal basis of $S.$ Then
we can extend  $\mathcal{B}_S$ to an orthonormal basis  $\mathcal{B}$ of $\mathcal{F}_a.$
For $\varphi \in \mathcal{F}_a$ and $e_\beta\in \mathcal{B}\smallsetminus\mathcal{B}_S,$ we have
\begin{align*}
\langle W_a(g_l)\varphi,e_\beta\rangle &=\int_{\mathbb{Q}}
g_l(q) \langle \pi_a(q)\varphi,e_\beta\rangle dq\\
&=\int_{\mathbb{Q}} g_0(q-\tilde{q}_l)e^{2i\langle qa,\tilde{q}_l\rangle}
\langle \pi_a(q)\varphi,e_\beta\rangle dq.
\end{align*}
Since $\langle (q+\tilde{q}_l)a,\tilde{q}_l\rangle
=\langle q a,\tilde{q}_l\rangle,$ by the change of variables
\begin{align*}
\langle W_a(g_l)\varphi,e_\beta\rangle
&=\int_{\mathbb{Q}} g_0(q)e^{2i\langle qa,\tilde{q}_l\rangle}
\langle \pi_a(q+\tilde{q}_l)\varphi,e_\beta\rangle dq.
\end{align*}
We know that $\pi_a(q)\pi_a(\tilde{q}_l)=e^{2i\langle qa,\tilde{q}_l)\rangle}\pi_a(q+\tilde{q}_l).$ Therefore,
\begin{align}\label{exp75}
\langle W_a(g_l)\varphi,e_j\rangle
&=\int_{\mathbb{Q}} g_0(q)\langle \pi_a(q)\pi_a(\tilde{q}_l)\varphi,e_\beta\rangle dq \nonumber \\
&=\int_{\mathbb{Q}} g_0(q) \langle \pi_a(q)\psi,e_\beta\rangle dq \nonumber \\
&=\langle W_a(g_0)\psi,e_\beta\rangle=0.
\end{align}
Hence $\mathcal{R}(W_a(g_l))\subseteq S.$ Let $A_l=A_{l-1}\cup \tilde{q}_lA_0.$
Then $\tilde{A}_{l+1}=\tilde{A}_l\cup A_l.$ Thus, we have
$m(A_l\smallsetminus A_{l-1})\geq m(\tilde{A}_{l+1}\smallsetminus \tilde{A}_l)>0.$
Let $s\in \mathbb{N}.$ Since, $A_s=A_0\cup \tilde{q}_1A_0\cup\cdots\cup \tilde{q}_s A_0$ and $g_l=0$ on $(\tilde{q}_lA_0)^c,$
we have $E_{A_s}g_l=g_l$ for $l=0,1,\ldots,s$. Furthermore, $E_{A_s \smallsetminus A_{s-1}}g_l=0$ for
$l=0,\ldots,s-1$ and $E_{A_s \smallsetminus A_{s-1}}g_s\neq 0$. Therefore, it shows that $g_s$ is not a
linear combination of $g_0,\ldots,g_{s-1}.$ Since $s$ is arbitrary, $\{g_l:l\in \mathbb{N}\}$
is a linearly independent set in $\mathcal{R}( E_{\tilde A}\cap F_S).$
\end{proof}

\begin{proposition}\label{prop70}
Let $A$ be a measurable subset of $\mathbb{Q}$ having finite measure. Then the projection $E_A\cap F_N=0$.
\end{proposition}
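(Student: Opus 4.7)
The plan is to deduce this proposition directly from Proposition \ref{prop74}, specialized to $S=\mathcal{B}_N$, together with the Hilbert--Schmidt estimate of Lemma \ref{lemma72}. First I would observe that for $S=\mathcal{B}_N$ the projection $F_S$ defined just before Proposition \ref{prop74} coincides with $F_N$, so Proposition \ref{prop74} delivers the following dichotomy: either $E_A\cap F_N=0$ (the desired conclusion), or for every $\epsilon'>0$ there is a measurable set $\tilde A\supset A$ with $m(\tilde A\smallsetminus A)<\epsilon'$ such that $\dim\mathcal{R}(E_{\tilde A}\cap F_N)=\infty$.

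I would then rule out the second alternative by a direct dimension count. Fix any $\epsilon'>0$ and the corresponding $\tilde A$. Because $m(\tilde A)\le m(A)+\epsilon'<\infty$, Lemma \ref{lemma72} applies to $\tilde A$ and yields $\|E_{\tilde A}F_N\|_{HS}^2=c'\,m(\tilde A)\,N$, which is finite. Combined with the standard projection inequality (\ref{exp74}), this gives
\[
\dim\mathcal{R}(E_{\tilde A}\cap F_N)=\|E_{\tilde A}\cap F_N\|_{HS}^2\le \|E_{\tilde A}F_N\|_{HS}^2=c'\,m(\tilde A)\,N\le c'(m(A)+\epsilon')N<\infty,
\]
contradicting $\dim\mathcal{R}(E_{\tilde A}\cap F_N)=\infty$. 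Hence the first alternative must hold and $E_A\cap F_N=0$.

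The potentially hard part has already been absorbed into Proposition \ref{prop74}, where the construction $g_l(q)=e^{2i\langle qa,\tilde q_l\rangle}g_0(q-\tilde q_l)$ is shown to yield an infinite family of twisted translates lying in $\mathcal{R}(E_{\tilde A}\cap F_S)$; once that is in hand the present argument is a short bookkeeping step that marries the qualitative dichotomy with the quantitative bound of Lemma \ref{lemma72}. If one preferred a self-contained proof in the style of Proposition \ref{prop52}, one would instead pick $s>2c'\,m(A_0)N$ for $A_0=\{q\in A:g_0(q)\neq0\}$, iterate Lemma \ref{lemma53} with $\epsilon=\tfrac{1}{2c'N}$ to build an increasing chain $A_0\subset A_1\subset\cdots\subset A_s$ with $m(A_s)<m(A_0)+\tfrac{s}{2c'N}$, and exhibit $s+1$ linearly independent twisted translates of $g_0$ inside $\mathcal{R}(E_{A_s}\cap F_N)$, contradicting the bound $\dim\mathcal{R}(E_{A_s}\cap F_N)\le c'\,m(A_s)\,N<s$ supplied by Lemma \ref{lemma72}. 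Either route yields the proposition.
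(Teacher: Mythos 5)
Your proposal is correct and follows essentially the same route as the paper: the authors likewise combine the projection inequality (\ref{exp74}) with the Hilbert--Schmidt bound of Lemma \ref{lemma72} to get $\dim\mathcal{R}(E_{\tilde A}\cap F_N)\leq c'\,m(\tilde A)\,N<\infty$, and then invoke the dichotomy of Proposition \ref{prop74} to conclude $E_A\cap F_N=0$. Your write-up merely makes the contradiction with the second alternative explicit, which the paper leaves implicit.
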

\begin{proof}
In view of (\ref{exp74}) and Lemma \ref{lemma72}, we obtain the relations
\[\dim\mathcal{R}( E_{\tilde{A}}\cap F_N) \leq c'm(\tilde{A})N^2 <\infty. \]
Therefore, as a corollary of Proposition \ref{prop74}, we get $E_A\cap F_N=0.$
\end{proof}

The following theorem is our main result of this section, which is an analogue
of Benedicks-Amrein-Berthier theorem on the quaternion Heisenberg group.

\begin{theorem}\label{th70}
Let $A\subseteq\mathbb Q$ be a set of finite measure. Suppose $f\in L^1(\mathcal{Q})$
and $\{(q,t)\in \mathcal{Q}:f(q,t)\neq 0\} \subseteq A\times \mathbb{R}^3.$ If $\hat{f}(a)$
is a finite rank operator for each $a \in \text{Im } \mathbb{Q}\setminus \{0\}$, then $f=0$.
\end{theorem}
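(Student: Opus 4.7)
The plan is to reduce Theorem \ref{th70} to Proposition \ref{prop70} by passing from the group Fourier transform on $\mathcal{Q}$ to the Weyl transform on $\mathbb{Q}$, exactly as done for the Heisenberg motion group in the previous section. Concretely, using $\hat{f}(a)=W_a(f^a)$, where $f^a$ denotes the partial Fourier transform of $f$ in the central variable $t\in\mathbb{R}^3$, the support and rank hypotheses on $f$ and $\hat{f}(a)$ translate into a statement about the intersection $\mathcal{R}(E_A)\cap\mathcal{R}(F_N)$, which is trivial by Proposition \ref{prop70}. Uniqueness of the Euclidean Fourier transform in $t$ then yields $f=0$.

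First, I would verify that $f^a\in L^2(\mathbb{Q})$ so that the projection framework applies. Since $f\in L^1(\mathcal{Q})$, the pointwise bound $|f^a(q)|\leq\int_{\mathbb{R}^3}|f(q,t)|\,dt$ shows $f^a\in L^\infty$; combined with the support hypothesis $\{f^a\neq 0\}\subseteq A$ and $m(A)<\infty$, this puts $f^a\in L^\infty(A)\subseteq L^2(\mathbb{Q})$ with $f^a\in\mathcal{R}(E_A)$.

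Next, I would use the finite-rank hypothesis on $\hat{f}(a)=W_a(f^a)$ to place $f^a$ in $\mathcal{R}(F_N)$. For each fixed $a\in\text{Im }\mathbb{Q}\setminus\{0\}$, the range of $W_a(f^a)$ is a finite-dimensional subspace of $\mathcal{F}_a$. Choosing an orthonormal basis $\{e_l\}_{l\in\mathbb{N}}$ of $\mathcal{F}_a$ whose first $N=N(a)$ vectors span $\mathcal{R}(W_a(f^a))$, we obtain $f^a\in\mathcal{R}(F_N)$ in the notation of (\ref{exp77}). By Proposition \ref{prop70}, $\mathcal{R}(E_A)\cap\mathcal{R}(F_N)=\{0\}$, so $f^a=0$ for every $a\in\text{Im }\mathbb{Q}\setminus\{0\}$.

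Finally, I would recover $f=0$ from $f^a\equiv 0$. By Fubini, $f(q,\cdot)\in L^1(\mathbb{R}^3)$ for a.e. $q$, so $a\mapsto f^a(q)$ is continuous on $\text{Im }\mathbb{Q}$; its vanishing for $a\neq 0$ then forces it to vanish identically, and the injectivity of the Euclidean Fourier transform on $\mathbb{R}^3$ gives $f(q,\cdot)=0$ for a.e. $q$, hence $f=0$. I do not expect substantive obstacles in this final reduction: the heavy lifting (twisted translation invariance of $\mathcal{R}(F_N)$, the Hilbert--Schmidt bound of Lemma \ref{lemma72}, and the accumulation argument producing infinitely many linearly independent functions) has already been carried out in Propositions \ref{prop74} and \ref{prop70}.
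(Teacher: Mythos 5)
Your overall strategy is exactly the paper's: reduce Theorem \ref{th70} to the Weyl transform statement (Proposition \ref{prop71}) via $\hat f(a)=W_a(f^a)$, invoke Proposition \ref{prop70} to conclude $f^a=0$ for every $a$, and then recover $f=0$ from the injectivity of the Euclidean Fourier transform in $t$. However, one step fails as written. From $f\in L^1(\mathcal Q)$ the bound $|f^a(q)|\le\int_{\mathbb R^3}|f(q,t)|\,dt$ only shows that $f^a$ is dominated by an $L^1$ function of $q$ (Fubini), not by a constant; so it gives $f^a\in L^1(\mathbb Q)$, not $f^a\in L^\infty$. For example, $f(q,t)=h(q)k(t)$ with $h\in L^1\smallsetminus L^2$ supported in $A$ and $k\in L^1(\mathbb R^3)$ is admissible for your first step yet has $f^a=\hat k(a)\,h\notin L^2(\mathbb Q)$. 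Hence the support hypothesis alone does not place $f^a$ in $L^2(\mathbb Q)$, and the projection framework of (\ref{exp77}) cannot yet be applied.

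The paper closes this gap using the rank hypothesis rather than the support hypothesis: since $W_a(f^a)$ has finite rank it is in particular Hilbert--Schmidt, and the Plancherel formula (\ref{exp70}) for the Weyl transform then forces $f^a\in L^2(\mathbb Q)$; this is precisely the sentence following Proposition \ref{prop71}. With that substitution your argument goes through and coincides with the paper's. Two further points are glossed over but harmless: knowing $f^a=0$ in $L^2$ for each fixed $a$ only gives vanishing of $f^a(q)$ off an $a$-dependent null set in $q$, so one should apply Fubini on $\mathbb Q\times(\text{Im }\mathbb Q\smallsetminus\{0\})$ before invoking continuity of $a\mapsto f^a(q)$; and the subspace $\mathcal B_N$ (hence $F_N$) depends on $a$, which is fine because Proposition \ref{prop70} holds for every such choice.
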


Proof of Theorem \ref{th70} follows from the following result for the Weyl transform
on $\mathbb Q.$

\begin{proposition}\label{prop71}
Let $g\in L^1(\mathbb{Q})$ and $\{q\in \mathbb{Q}:g(q)\neq 0\} \subseteq A,$ where $m(A)$ is finite.
If there exists $a \in \text{Im } \mathbb{Q}\setminus \{0\}$ such that $W_a(g)$ has finite rank,
then $g=0$.
\end{proposition}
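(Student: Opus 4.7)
The plan is to reduce Proposition \ref{prop71} to Proposition \ref{prop70} by first upgrading $g$ from $L^1(\mathbb{Q})$ to $L^2(\mathbb{Q})$ and then running the projection argument. Since $W_a(g)$ has finite rank, set $N=\text{rank}(W_a(g))$ and $\mathcal{B}_N=\mathcal{R}(W_a(g))$, an $N$-dimensional subspace of $\mathcal{F}_a$. Let $P_N$ be the projection of $\mathcal{F}_a$ onto $\mathcal{B}_N$, and let $E_A,F_N$ denote the projections on $L^2(\mathbb{Q})$ built from this $\mathcal{B}_N$ as in (\ref{exp77}).

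To verify that $g\in L^2(\mathbb{Q})$, my first attempt would be to apply the inversion formula (\ref{exp71}) directly. Writing $W_a(g)=\sum_{j=1}^N s_j\,\langle\,\cdot\,,u_j\rangle v_j$ via a singular value decomposition, with $u_j,v_j\in\mathcal{F}_a$ unit vectors and $s_j>0$, the trace collapses to
\[
\text{tr}(\pi_a^*(q)W_a(g))=\sum_{j=1}^N s_j\,\langle v_j,\pi_a(q)u_j\rangle,
\]
which is uniformly bounded in $q$ because $\pi_a(q)$ is unitary. Hence (\ref{exp71}) forces $g\in L^\infty(\mathbb{Q})$, and combined with $\text{supp}(g)\subseteq A$ and $m(A)<\infty$, this yields $g\in L^1\cap L^2(\mathbb{Q})$. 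Should (\ref{exp71}) not be directly available for an $L^1$ function, the standard workaround is to replace $g$ by its twisted convolution with $\phi\in C_c^\infty(\mathbb{Q})$: writing this as $h$, one has $W_a(h)=W_a(g)W_a(\phi)$ so $\mathcal{R}(W_a(h))\subseteq\mathcal{B}_N$, $h\in L^2(\mathbb{Q})$ by Young's inequality, and $\text{supp}(h)\subseteq A+\text{supp}(\phi)$ has finite measure.

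With $g\in L^2$ (respectively $h\in L^2$) in hand, the remainder parallels the proof of Proposition \ref{prop50}: plainly $g=\chi_A g$, so $g\in\mathcal{R}(E_A)$; and $\mathcal{R}(W_a(g))\subseteq\mathcal{B}_N$ gives $P_NW_a(g)=W_a(g)$, hence $W_a(F_N g)=W_a(g)$, whence the $L^2$-injectivity of $W_a$ provided by Plancherel (\ref{exp70}) forces $F_N g=g$, so $g\in\mathcal{R}(F_N)$. Therefore $g\in\mathcal{R}(E_A)\cap\mathcal{R}(F_N)=\mathcal{R}(E_A\cap F_N)=\{0\}$ by Proposition \ref{prop70}, giving $g=0$; in the mollified route, $h=0$ for every $\phi\in C_c^\infty(\mathbb{Q})$ yields $g=0$ via a twisted-convolution approximate identity in $L^1$. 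I expect the principal technical obstacle to be precisely this $L^1\to L^2$ upgrade: one must either validate (\ref{exp71}) at the level of finite-rank Weyl transforms of $L^1$ functions, or verify the support behavior of the twisted convolution and the $L^1$-convergence of the chosen approximate identity.
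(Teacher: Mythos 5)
Your reduction to Proposition \ref{prop70} is exactly the paper's argument: the paper disposes of Proposition \ref{prop71} in two lines by noting that a finite rank operator is Hilbert--Schmidt, so the Plancherel formula (\ref{exp70}) forces $g\in L^2(\mathbb{Q})$, after which $g\in\mathcal{R}(E_A)\cap\mathcal{R}(F_N)=\{0\}$. Where you differ is in how you justify the $L^1\to L^2$ upgrade. Your primary route (singular value decomposition plus the inversion formula, giving $g\in L^\infty$ supported on a set of finite measure) is sound in spirit, but note that (\ref{exp71}), like Theorem \ref{th50}, is only stated for $g\in L^1\cap L^2$; validating it for an $L^1$ function with trace-class Weyl transform requires essentially the same density/duality argument that the paper's Plancherel route needs, so you gain nothing in rigor and the paper's one-line version is cleaner.

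Your fallback via mollification contains a genuine error: the claim that $\mathrm{supp}(h)\subseteq A+\mathrm{supp}(\phi)$ has finite measure is false in general. If $A$ has finite measure but is spread out --- for instance $A=\bigcup_{n\ge 1}\bigl(n,n+2^{-n}\bigr)$ in one real coordinate, crossed with a fixed box --- then $A+\mathrm{supp}(\phi)$ contains a ball of fixed radius around every point of $A$ and has infinite measure. Consequently $h$ need not satisfy the support hypothesis of Proposition \ref{prop70}, and that route collapses. If you want a workaround that avoids invoking (\ref{exp71}) for mere $L^1$ functions, the correct one is the duality argument behind the paper's assertion: for $\phi\in L^1\cap L^2(\mathbb{Q})$ one has $\bigl|\int g\bar{\phi}\bigr|=c_a\bigl|\langle W_a(g),W_a(\phi)\rangle_{HS}\bigr|\le c_a\|W_a(g)\|_{HS}\,\|W_a(\phi)\|_{HS}\le C\|\phi\|_2$, so $g$ extends to a bounded functional on $L^2(\mathbb{Q})$ and hence lies in $L^2(\mathbb{Q})$; no control of the support of any auxiliary function is needed.
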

Since $W_a(g)$ is a finite rank operator, by the Plancherel theorem for the Weyl
transform on $\mathbb Q,$ it follows that $g\in L^2(\mathbb{Q})$. Hence, it is enough to prove Proposition \ref{prop71}
for $g\in L^2(\mathbb{Q}).$ The prove of Proposition \ref{prop71} follows from Proposition \ref{prop70}.

\begin{remark}
If $0<m(A)<\infty,$ then $\dim\mathcal{R}(E_A)=\infty.$ In view of Proposition \ref{prop70}
and the fact that $E_A=(E_A\cap F_N) +(E_A\cap F_{N}^{\perp})=(E_A\cap F_{N}^{\perp}),$
it follows that $\dim\mathcal{R}(E_A\cap F_{N}^{\perp})=\infty.$ Since $m(A^c)=\infty,$
there exists a measurable set $B\subseteq A^c$ satisfying $0<m(B)<\infty.$ Hence
$\mathcal{R}(E_{A^c}\cap F_{N}^{\perp})\supseteq \mathcal{R}(E_B\cap F_{N}^{\perp})$.
This implies $\dim\mathcal{R}(E_{A^c}\cap F_{N}^{\perp})=\infty$. Similarly it can be shown
that $\dim\mathcal{R}(E_{A^c}\cap F_N)=\infty$.
\end{remark}

As a quantitative version of Proposition \ref{prop71}, we can obtain the following result, which
will follow from Lemma \ref{lemma7}, Lemma \ref{lemma72} and Proposition \ref{prop70}.
\begin{proposition}
Let $A$ be a measurable subset of $\mathbb{Q}$ with finite measure and $S$ be a finite
dimensional subspace of $\mathcal{F}_a$. Then, there exists a positive number $C=C(A,S)$
such that for every $g \in L^2(\mathbb{Q}),$
\begin{align*}
\|g\|_2^2\leq C \left(\|g\|_{L^2(A^c)}^2+\|P_S^{\perp} W_a(g) \|_{HS}^2\right),
\end{align*}
where $P_S$ is the projection of $\mathcal{F}_a$ onto $S.$
\end{proposition}

\smallskip

\noindent{\bf Acknowledgements:}
The authors wish to thank E. K. Narayanan for several fruitful discussions during the preparation 
of the manuscript. The first author gratefully acknowledges the support provided by IIT Guwahati, 
Government of India.



\begin{thebibliography}{1000}

\bibitem{AB} W. O. Amrein and A. M. Berthier, {\em On support properties of {$L^{p}$}-functions and their
{F}ourier transforms,} J. Functional Analysis 24 (1977), no. 3, 258-267.

\bibitem{AL} D. Arnal and J. Ludwig, {\em Q.U.P. and Paley-Wiener properties of unimodular, especially nilpotent,
 Lie groups,} Proc. Amer. Math. Soc. 125 (1997), no. 4, 1071-1080.

\bibitem{B} M. Benedicks, {\em On Fourier transforms of functions supported on sets of finite Lebesgue measure,}
J. Math. Anal. Appl. 106 (1985), no. 1, 180-183.

\bibitem{BJR} C. Benson, J. Jenkins and G. Ratcliff, {\em Bounded K-spherical functions on Heisenberg groups,}
J. Funct. Anal. 105 (1992), no. 2, 409-443.

\bibitem{CM} D. C. Chang and I. Markina, {\em Geometric analysis on quaternion {$\Bbb H$}-type groups,}
J. Geom. Anal. 16 (2006), no. 2, 265-294.

\bibitem{CGS} A. Chattopadhyay, D. K. Giri and R. K. Srivastava, {\em Uniqueness of the Fourier transform on certain
Lie groups,} \href{https://arxiv.org/abs/1607.03832}{arXiv:1607.03832}.

\bibitem{CZ} L. Chen and J. Zhao, {\em Weyl transform and generalized spectrogram associated
with quaternion {H}eisenberg group,} Bull. Sci. Math. 136 (2012), no. 2, 127-143.

\bibitem{FS} G. B. Folland and A. Sitaram, {\em The uncertainty principle: a mathematical survey,}
J. Fourier Anal. Appl. 3 (1997), 207-238.

\bibitem{GS} S. Ghosh and R. K. Srivastava, {\em Heisenberg uniqueness pairs for the Fourier transform on
the Heisenberg group,} \href{https://arxiv.org/abs/1810.06390}{arXiv:1810.06390}.

\bibitem{HJ} V. Havin and B. J\"{o}ricke, {\em The uncertainty principle in harmonic analysis,}
Ergeb. Math. Grenzgeb., 28, Springer-Verlag, New York, 1994.

\bibitem{MW} C. C. Moore and J. A. Wolf, {\em Square integrable representations of nilpotent groups,}
Trans. Amer. Math. Soc. 185 (1973), 445-462.

\bibitem{NR} E. K. Narayanan and P. K. Ratnakumar, {\em Benedicks' theorem for the Heisenberg group,}
Proc. Amer. Math. Soc. 138 (2010), no. 6, 2135-2140.

\bibitem{ST} R. P. Sarkar and S. Thangavelu, {\em An analogue of the Wiener Tauberian theorem for the Heisenberg
motion group,} J. Indian Inst. Sci. 87 (2007), no. 4, 467-474.

\bibitem{S} S. Sen, {\em Segal-Bargmann transform and Paley-Wiener theorems on Heisenberg motion groups},
Adv. Pure Appl. Math. 7 (2016), no. 1, 13-28.

\bibitem{Su} M. Sugiura, {\em Unitary representations and harmonic analysis}, North-Holland Mathematical Library, 44,
North-Holland Publishing Co., Amsterdam; Kodansha, Ltd., Tokyo, 1990.

\bibitem{V} M. K. Vemuri, {\em Benedicks theorem for the Weyl transform}, J. Math. Anal. Appl. 452 (2017), no. 1, 209-217.

\end{thebibliography}
\end{document}